\documentclass[12pt]{amsart}

\usepackage{amssymb, amsfonts, amsthm} 
\usepackage{graphicx}
\usepackage{color}


\addtolength{\textwidth}{4cm}
\addtolength{\hoffset}{-2cm}
\addtolength{\textheight}{2cm}
\addtolength{\voffset}{-1cm}

\newtheorem{theorem}[equation]{Theorem}
\newtheorem{lemma}[equation]{Lemma}
\newtheorem{corollary}[equation]{Corollary}
\newtheorem{proposition}[equation]{Proposition}

\numberwithin{equation}{section}

\theoremstyle{definition}

\newtheorem{remark}[equation]{Remark}

\newtheorem{problem}[equation]{Problem}
\newtheorem{conjecture}[equation]{Conjecture}

\newcommand{\be}{\begin{equation}}
\newcommand{\ee}{\end{equation}}

\DeclareMathOperator{\diam}{diam}
\DeclareMathOperator{\dist}{dist}
\newcommand{\Rn}{{\mathbb R}^n}

\newcommand{\Bn}{{\mathbb B}^n}
\newcommand{\Rt}{{\mathbb R}^2}

\newcommand{\psubset}{\varsubsetneq}

\newcommand{\comment}[1]{}

\pagestyle{myheadings}


\newcounter{minutes}\setcounter{minutes}{\time}
\divide\time by 60
\newcounter{hours}\setcounter{hours}{\time}
\multiply\time by 60 \addtocounter{minutes}{-\time}

\begin{document}

\title[Subdomain geometry of hyperbolic type metrics]
{Subdomain geometry of hyperbolic type metrics}
\author[R. Kl\'en]{R. Kl\'en}
\address{Riku Kl\'en, Department of Mathematics and Statistics, University of Turku,
FIN-20014 Turku, Finland}
\email{riku.klen@utu.fi}
\author[Y. Li]{Y. Li}
\address{Yaxiang Li, Department of Mathematics,
Hunan Normal University, Changsha,  Hunan 410081, People's Republic
of China} \email{yaxiangli@163.com}
\author[M. Vuorinen]{M. Vuorinen}
\address{Matti Vuorinen, Department of Mathematics and Statistics, University of Turku,
FIN-20014 Turku, Finland}
\email{vuorinen@utu.fi}



\keywords{quasihyperbolic metric,
 distance ratio metric $j$, Euclidean metric, inequalities}
\subjclass[2010]{Primary 30F45; Secondary 30C65}

\begin{abstract}
Given a domain $G \subsetneq \Rn$ we study the quasihyperbolic and the distance ratio metrics of $G$ and their connection to the corresponding metrics of a subdomain $D \subset G$. In each case, distances
in the subdomain are always larger than in the original domain.
Our goal is to show that, in several cases, one can prove
a stronger domain monotonicity statement.
We also show  that under special hypotheses we have
inequalities in the opposite direction.
\end{abstract}

\maketitle

\def\thefootnote{}
\footnotetext{ \texttt{\tiny File:~\jobname .tex,
          printed: \number\year-\number\month-\number\day,
          \thehours.\ifnum\theminutes<10{0}\fi\theminutes }
} \makeatletter\def\thefootnote{\@arabic\c@footnote}\makeatother




\section{Introduction}\label{sec-1}


Recently many authors have studied what we call ''hyperbolic type metrics'' of a domain $G \subset \Rn$ \cite{HIMPS,HPS,Klen,Lin,RasilaTalponen12,Va05}. Some of the examples are the Apollonian metric, the M\"obius invariant metric, the quasihyperbolic metric and the distance ratio metric.
The term ''hyperbolic type metric'' is for us just a descriptive
term, we do not define it. The term is justified by the fact that the metric is similar to the hyperbolic metric of the unit ball $\Bn\,.$
In this paper we will study a hyperbolic type metric
 $m_G$  with the following  two properties:
\begin{enumerate}
  \item if $D \subset G$ is a subdomain, then $m_D(x,y) \ge m_G(x,y)$ for all $x,y \in D$,\label{subdomain condition}
  \item sensitivity to the boundary variation: if $x_0 \in G$ and $D = G \setminus \{ x_0 \}$, then the metrics $m_G$ and $m_D$ are quite different close to $x_0$ whereas "far away" from $x_0$ we might expect that they are nearly equal (see Remark \ref{rem-intro-1} \eqref{rem-intro}).
\end{enumerate}
In particular, we require that $m_G$ is defined for every proper
subdomain of $\Rn\,.$
The purpose of this paper is to study the subdomain monotonicity property \eqref{subdomain condition} and to prove conditions under which we have a quantitative refinement of \eqref{subdomain condition}.

For a subdomain $G \psubset {\mathbb R}^n$ and
$x,y\in G$ the {\em distance ratio
metric $j_G$} is defined by
$$ j_G(x,y)=\log\left(1+\frac{|x-y|}{\min\{\delta_G(x),\delta_G(y)\}}\right)\,,
$$ where $\delta_G(x)$ denotes the Euclidean distance from $x$ to $\partial G$. Sometimes
we abbreviate $\delta_G$ by writing just $\delta\, .$
The above form of the $j_G$ metric, introduced in \cite{Vu2}, is obtained by a slight modification
of a metric that was studied in \cite{GO,GP}.
The {\em quasihyperbolic metric} of $G$ is defined by
the quasihyperbolic length minimizing property
$$k_G(x,y)=\inf_{\gamma\in \Gamma(x,y)} \ell_k(\gamma),
\quad \ell_k(\gamma) =\int_\gamma \frac{|dz|}{\delta_G(z)}\,,
$$
where $\Gamma(x,y)$ represents the family of all rectifiable paths
joining $x$ and $y$ in $G$, and $\ell_k(\gamma)$ is the quasihyperbolic length of
$\gamma$ (cf. \cite{GP}).
For a given pair of points $x,y\in G,$ the infimum is always
attained \cite{GO}, i.e., there always exists a quasihyperbolic
geodesic $J_G[x,y]$ which minimizes the above integral,
$k_G(x,y)=\ell_k(J_G[x,y])\,$ and furthermore with the property that
the distance is additive on the geodesic: $k_G(x,y)=$
$k_G(x,z)+k_G(z,y) $ for all $z\in J_G[x,y]$. If the domain $G$ is
emphasized we call $J_G[x,y]$ a $k_G$-geodesic. In this paper, our main work is to refine some inequalities between $k_G$ metric, $j_G$ metric and the Euclidean metric. Both the distance ratio and the quasihyperbolic metric qualify as hyperbolic type metrics because
 \begin{itemize}
 \item[$\bullet$] both are defined for every proper subdomain of $\Rn\,,$
 \item[$\bullet$] for the case of the unit ball $\Bn$ both are comparable to the hyperbolic metric of $\Bn\,,$ see Section \ref{sec-2} below,
  \item[$\bullet$] it is well-known that both metrics satisfy the above properties (1) and (2).
 \end{itemize}
 These metrics have recently been studied, e.g., in \cite{HIMPS,Klen,RasilaTalponen12}. We mainly study the following three problems and our main results will be given in Section \ref{sec-2}, Section \ref{sec-3} and Section \ref{sec-4} respectively.

\begin{problem}\label{prob1}
For some special domains, can we obtain certain upper estimates for the quasihyperbolic metric in terms of the distance ratio metric?
\end{problem}

Indeed, inequalities of this type were used to characterize so called
$\varphi$-domains in \cite{Vu2}.

\begin{problem}\label{prob2}
Is there some relation between $k$ metric and the Euclidean metric? The same question can be asked for $j$ metric and the Euclidean metric?
\end{problem}

Let $G_1$ and $G_2$ be proper subdomains of $\Rn$. It is well know that if $G_1\subset G_2$ then for all $x,y\in G_1$, $$j_{G_1}(x,y)\geq j_{G_2}(x,y)$$ and $$k_{G_1}(x,y)\geq k_{G_2}(x,y).$$ We expect some better results to hold for some special class of domains. This motivates the following question.

\begin{problem}\label{pro}
Let $G_1\subset G_2$ be two proper subdomains in $\Rn$ such that $\partial G_1\cap\partial G_2$ is either $\varnothing$ or a discrete set.  Does there  exist a constant $c>1$ such that for all $x,y\in G_1$, the following holds:
\begin{equation}\label{eq0}
  m_{G_1}(x,y)\geq c m_{G_2}(x,y),
\end{equation}
where $m_{G_i} \in \{ j_{G_i},k_{G_i} \}$ for $i=1,2.$
\end{problem}

Our main results for Problem \ref{prob1} are Theorems \ref{newlem} and \ref{newlemma2}, for Problem \ref{prob2}  Theorems \ref{Jung-appl} and \ref{Jung-appl-j} and for Problem \ref{pro}  Theorems \ref{thm1'} and \ref{thm2}. We also formulate some open problems and conjectures.
Finally, it should be pointed out that there are many more metrics
for which the above problems could be studied. For some of these metrics, see \cite{Vu07}.


\section{Results concerning Problem \ref{prob1}}\label{sec-2}

In this section, we study Problem \ref{prob1} and our mains results are Theorems \ref{newlem} and \ref{newlemma2}.
The following proposition, which will be used in the proof of Theorems \ref{newlem}, gathers together several
basic well-known properties of the metrics $k_G$ and $j_G$, see for instance \cite{GP,Vu}.
The motivation comes from the well-known inequality
\begin{equation}\label{j-le-k}k_G(x,y) \ge \log\left(1+ \frac{L}{\min\{\delta(x), \delta(y)\}}\right) \ge j_G(x,y)\,,
\end{equation}
for a domain $G\psubset {\mathbb R}^n, x,y \in G,$
where $L= \inf \{ \ell(\gamma) : \gamma\in \Gamma(x,y) \}\,.$
One can ask: when do both the metrics $j_G$ and $k_G$ (or $\rho_{B^n}$) coincide\,?
\begin{proposition}\label{k-equals-j}
\begin{enumerate}
\item\label{ksv-sub1} For $x\in B^n$, we have
$$k_{B^n}(0,x)=j_{B^n}(0,x)=\log \frac{1}{1-|x|}\,.
$$
\item\label{ksv-sub2} Moreover, for $b\in S^{n-1}$ and $0<r<s<1$, we have
$$k_{B^n}(br,bs)=j_{B^n}(br,bs)=\log \frac{1-r}{1-s}\,.
$$
\item\label{ksv-sub3} Let $G\psubset\Rn$ be a domain and $z_0\in G$.
Let $z\in\partial G$ be such that $\delta(z_0)=|z_0-z|$. Then for all
$u,v\in [z_0,z]$, we have

$$k_G(u,v)=j_G(u,v)= \left|\log
\frac{\delta(z_0)-|z_0-u|}{\delta(z_0)-|z_0-v|}\right|
=\left|\log \frac{\delta(u)}{\delta(v)}\right|.$$
\item\label{rhoj} For $x,y\in B^n$ we have
$$ j_{B^n}(x,y)\le \rho_{B^n}(x,y) \le 2 j_{B^n}(x,y)
$$
with equality on the right hand side when $x=-y\,.$
\end{enumerate}
\end{proposition}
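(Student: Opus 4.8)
The plan is to treat parts \eqref{ksv-sub1}--\eqref{ksv-sub3} by one common device: inequality \eqref{j-le-k} already gives $k_G \ge j_G$, so in each case it suffices to produce a path whose quasihyperbolic length equals the value of $j_G$. For \eqref{ksv-sub1} and \eqref{ksv-sub2} I would first read off $j_{B^n}$ from the definition using $\delta_{B^n}(w)=1-|w|$, which is a one-line computation giving the stated logarithm. For the reverse estimate I would take the radial segment $[br,bs]$ as a test path: parametrizing $z=tb$ one has $\delta_{B^n}=1-t$, so $\ell_k=\int_r^s dt/(1-t)=\log\frac{1-r}{1-s}$, already equal to $j_{B^n}(br,bs)$. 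Combined with $k_{B^n}\ge j_{B^n}$ this forces equality, and \eqref{ksv-sub1} is the special case $r=0$.

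Part \eqref{ksv-sub3} rests on a single geometric observation: for every $u\in[z_0,z]$ the point $z$ is \emph{still} the nearest boundary point, so that $\delta(u)=|u-z|=\delta(z_0)-|z_0-u|$ and $\delta$ is affine along the segment. I would prove this from $B(z_0,\delta(z_0))\subset G$: for $u$ on the segment, collinearity and the triangle inequality give $B(u,|u-z|)\subset B(z_0,\delta(z_0))\subset G$, hence $\delta(u)\ge|u-z|$, while $z\in\partial G$ gives $\delta(u)\le|u-z|$. With $\delta$ affine, the segment-as-test-path computation yields $k_G(u,v)=\big|\log(\delta(u)/\delta(v))\big|$, which coincides with the direct evaluation of $j_G(u,v)$; equality of $k_G$ and $j_G$ then follows once more from \eqref{j-le-k}.

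For part \eqref{rhoj} both metrics depend on $x,y$ only through $|x|,|y|,|x-y|$, so I may assume $|x|\ge|y|$ and set $p=1-|x|=\min\{\delta(x),\delta(y)\}$, $q=1-|y|$, $a=|x-y|$, $P=1-|x|^2=p(2-p)$, $Q=1-|y|^2=q(2-q)$, and $D=\sqrt{a^2+PQ}$, so that $\rho_{B^n}(x,y)=\log\frac{D+a}{D-a}$ (equivalently $\sinh^2(\rho/2)=a^2/(PQ)$) and $j_{B^n}(x,y)=\log(1+a/p)$. The lower bound $j_{B^n}\le\rho_{B^n}$ is quickest from the pointwise density comparison $2/(1-|w|^2)\ge 1/(1-|w|)$, which gives $\rho_{B^n}\ge k_{B^n}\ge j_{B^n}$; alternatively, after clearing denominators it reduces to $D\le 2p+a$, i.e. $(2-p)\,q(2-q)\le 4(p+a)$, which follows from $(2-p)(2-q)\le 4$ and $p+a\ge q$.

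The upper bound $\rho_{B^n}\le 2j_{B^n}$ is the crux. Exponentiating and using $(D-a)(D+a)=PQ$, it is equivalent to
\[ p(D+a)\le (p+a)\sqrt{PQ}. \]
I would study the difference
\[ f(a)=(p+a)\sqrt{PQ}-pa-p\sqrt{a^2+PQ} \]
as a function of $a$ with $p,q$ fixed. Since $-p\sqrt{a^2+PQ}$ is concave and the remaining terms are affine, $f$ is concave, and $f(0)=0$; hence it suffices to check $f\ge 0$ at the right endpoint $a=|x|+|y|=2-p-q$ of the admissible range $[\,q-p,\,2-p-q\,]$, because concavity then forces $f\ge 0$ on the whole interval via the chord through $(0,0)$. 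At that endpoint $x,0,y$ are collinear, $\rho_{B^n}(x,y)=\log\frac{(1+|x|)(1+|y|)}{(1-|x|)(1-|y|)}$ and $2j_{B^n}(x,y)=2\log\frac{1+|y|}{1-|x|}$, and the inequality collapses to $1-|x|^2\le 1-|y|^2$, i.e. $|y|\le|x|$, which holds by our normalization and becomes equality exactly when $|x|=|y|$, the case $x=-y$. I expect this upper bound to be the main obstacle: the inequality is sharp along diameters, so crude estimates such as $D+a\le 2(p+a)$ lose too much, and the decisive idea is the concavity-plus-endpoint reduction (or an equivalent sharp algebraic rearrangement).
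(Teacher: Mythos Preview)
Your treatment of parts \eqref{ksv-sub1}--\eqref{ksv-sub3} is essentially the paper's: use \eqref{j-le-k} for $k_G\ge j_G$ and exhibit the radial segment as a path whose quasihyperbolic length equals $j_G$. The paper derives \eqref{ksv-sub2} from \eqref{ksv-sub1} via additivity along the geodesic rather than by a direct integral, and for \eqref{ksv-sub3} it simply says ``follows from \eqref{ksv-sub2}''; your explicit verification that $\delta(u)=|u-z|$ along $[z_0,z]$ via $B(u,|u-z|)\subset B(z_0,\delta(z_0))$ supplies the detail the paper omits.

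Part \eqref{rhoj} is where you diverge. The paper does not prove it at all but cites \cite[Lemma~7.56]{AVV}. Your argument is self-contained: the lower bound via the density comparison $2/(1-|w|^2)\ge 1/(1-|w|)$ giving $\rho_{B^n}\ge k_{B^n}\ge j_{B^n}$ is standard, and your upper-bound reduction---fixing $|x|,|y|$, observing that $f(a)=(p+a)\sqrt{PQ}-pa-p\sqrt{a^2+PQ}$ is concave in $a$ with $f(0)=0$, and then checking only the diametral endpoint $a=|x|+|y|$---is a clean and correct way to reach the sharp inequality together with its equality case $x=-y$. The concavity-plus-endpoint reduction is exactly the right device here, since the inequality is tight along diameters; your anticipation that cruder bounds would fail is accurate. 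So your route for \eqref{rhoj} is genuinely different from (and more informative than) the paper's bare citation, at the cost of a short computation.
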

\begin{proof}
(1) We see from (\ref{j-le-k}) that
$$ j_{B^n}(0,x) = \log\frac{1}{1-|x|} \le k_{B^n}(0,x)
\le \int_{[0,x]} \frac{|dz|}{\delta(z)}= \log\frac{1}{1-|x|}
$$
and hence $[0,x]$ is the $k_{B^n}$-geodesic
between $0$ and $x$.

The proof of (\ref{ksv-sub2}) follows from (\ref{ksv-sub1}) because
the quasihyperbolic length is additive along a geodesic
$$ k_{B^n}(0,bs) = k_{B^n}(0,br)+ k_{B^n}(br,bs)\,.
$$

The proof of (\ref{ksv-sub3}) follows from (\ref{ksv-sub2}).

The proof of (\ref{rhoj}) is given in \cite[Lemma 7.56]{AVV}.
\end{proof}

The hyperbolic geometry of $B^n$ serves as model for the quasihyperbolic geometry and we will use below a few basic
facts of the hyperbolic metric  $\rho_{B^n}$ of  $B^n\,.$
These facts appear in standard textbooks of hyperbolic geometry
and also in \cite[Section 2]{Vu}. 
For the case of $B^n$, we make use of an explicit formula
 \cite[(2.18)]{Vu} to the effect that for
$x,y\in B^n$
\begin{equation} \label{rhodef}
\sinh \frac{\rho_{B^n}(x,y)}2=\frac{|x-y|}{t}\, ,
t=\sqrt{(1-|x|^2)(1-|y|^2)}\,.
\end{equation}

It is readily seen that 
$$\rho_{B^n} \le  2 k_{B^n} \le 2 \rho_{B^n}$$
and it is well-known by \cite[Lemma 7.56]{AVV} that a similar inequality also holds for $j_{B^n}$
$$j_{B^n} \le   \rho_{B^n} \le 2 j_{B^n} \,.$$

\begin{remark}
The proofs of Proposition~\ref{k-equals-j}~(1) and (2) show that the diameter
$(-e,e), e \in S^{n-1},$ of $B^n$ is a geodesic of $k_{B^n}$ and hence the
quasihyperbolic distance is additive on a diameter. At the same time
we see that the $j$ metric is additive on a radius of the unit ball
but not on the full diameter because for $x \in B^n \setminus \{ 0 \}$
$$ j_{B^n}(-x,x) < j_{B^n}(-x,0) + j_{B^n}(0,x) \,.$$
\end{remark}

In order to obtain certain upper estimates for the quasihyperbolic metric, in terms
of the distance ratio metric, we present the
following theorem.

\begin{theorem}\label{newlem}
\begin{enumerate}
\item\label{jk}
For $0<s<1$ and $x,y\in B^n(s)$, we have
$$ j_{B^n}(x,y) \le k_{B^n}(x,y) \le (1+s)\, j_{B^n}(x,y).
$$
\item\label{ksv-sub4} Let $G\psubset\Rn$ be a domain, $w \in G\,,$ and
$w_0\in (\partial G) \cap S^{n-1}(w,\delta(w))\,. $ If $s  \in (0,1)$ and
$x,y \in B^n(w, s \delta(w))\cap [w,w_0]$,
then we have
$$  k_G(x,y)\le (1+s) j_G(x,y)\,.$$
\end{enumerate}
\end{theorem}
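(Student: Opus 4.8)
The left-hand inequality $j_{B^n}\le k_{B^n}$ in part (\ref{jk}) is just the lower bound in \eqref{j-le-k}, so the real content is the two upper bounds. The plan for part (\ref{jk}) is to compare $k_{B^n}$ with the hyperbolic metric along a well-chosen path and then invoke $\rho_{B^n}\le 2j_{B^n}$ from Proposition~\ref{k-equals-j}~(\ref{rhoj}). First I would let $\gamma$ be the hyperbolic geodesic joining $x$ and $y$; since $k_{B^n}$ is the infimum of quasihyperbolic lengths over all joining paths, $k_{B^n}(x,y)\le\int_\gamma |dz|/(1-|z|)$. A Euclidean segment will not do here: for nearly antipodal $x,y$ close to $S^{n-1}(s)$ it overshoots (the true geodesic runs near the origin), and this is precisely what forces the use of $\gamma$.

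The reason $\gamma$ is the right path is that it stays inside $B^n(s)$. Indeed $B^n(s)$ is a hyperbolic ball centered at $0$, hence hyperbolically convex; equivalently $z\mapsto\rho_{B^n}(0,z)$ is increasing in $|z|$ and convex along $\gamma$, so its maximum on $\gamma$ is attained at $x$ or $y$ and $|z|\le\max\{|x|,|y|\}\le s$ for all $z\in\gamma$. With $|z|\le s$ the elementary pointwise estimate $\frac{1}{1-|z|}=\frac{1+|z|}{1-|z|^2}\le\frac{1+s}{1-|z|^2}$ converts the quasihyperbolic density into $(1+s)$ times half the hyperbolic length element $2|dz|/(1-|z|^2)$ (consistent with \eqref{rhodef}). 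Since $\gamma$ is the hyperbolic geodesic, integrating gives
$$k_{B^n}(x,y)\le\int_\gamma\frac{|dz|}{1-|z|}\le(1+s)\int_\gamma\frac{|dz|}{1-|z|^2}=\frac{1+s}{2}\,\rho_{B^n}(x,y)\le(1+s)\,j_{B^n}(x,y),$$
the last step by Proposition~\ref{k-equals-j}~(\ref{rhoj}).

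The step I expect to be the main obstacle is the hyperbolic convexity of $B^n(s)$, i.e. that $\gamma$ does not leave $B^n(s)$; everything else is a one-line density comparison. If one prefers not to quote hyperbolic convexity, it can be proved by hand after reducing to the two-plane through $0,x,y$: there $\gamma$ is an arc of a circle orthogonal to $S^{n-1}$ (or a diameter), and along such an arc $|z|$ is unimodal, so its maximum is taken at an endpoint, again giving $|z|\le\max\{|x|,|y|\}\le s$.

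For part (\ref{ksv-sub4}) the plan is to localize part (\ref{jk}) to the inscribed ball $B=B^n(w,\delta(w))\subset G$. The similarity $z\mapsto (z-w)/\delta(w)$ carries $B$ onto $B^n$ and $B^n(w,s\delta(w))$ onto $B^n(s)$, so part (\ref{jk}) yields $k_B(x,y)\le(1+s)\,j_B(x,y)$. Subdomain monotonicity of $k$ (from $B\subset G$) gives $k_G(x,y)\le k_B(x,y)$, and for $x,y\in[w,w_0]$ the point $w_0\in\partial B\cap\partial G$ is a common nearest boundary point, whence $\delta_B=\delta_G$ on $[w,w_0]$ and therefore $j_B(x,y)=j_G(x,y)$; chaining these three facts gives $k_G(x,y)\le(1+s)\,j_G(x,y)$. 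I note that Proposition~\ref{k-equals-j}~(\ref{ksv-sub3}) in fact already gives $k_G(x,y)=j_G(x,y)$ for such $x,y$, so the conclusion even holds with equality; the argument above is the one that exhibits it as a genuine localization of part (\ref{jk}).
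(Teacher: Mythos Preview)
Your proof is correct and follows essentially the same route as the paper: for part~(\ref{jk}) take the hyperbolic geodesic $\gamma\subset B^n(s)$, use the pointwise density comparison $\frac{1}{1-|z|}\le\frac{1+s}{1-|z|^2}$ to bound $k_{B^n}$ by $\frac{1+s}{2}\rho_{B^n}$, and finish with Proposition~\ref{k-equals-j}~(\ref{rhoj}); for part~(\ref{ksv-sub4}) localize to $B=B^n(w,\delta(w))$ and chain $k_G\le k_B\le(1+s)j_B\le(1+s)j_G$. Your justification of $\gamma\subset B^n(s)$ via hyperbolic convexity and your identification of $j_B=j_G$ on $[w,w_0]$ (the paper writes only $\le$, which is in fact equality here) make explicit steps the paper leaves implicit, and your closing remark that Proposition~\ref{k-equals-j}~(\ref{ksv-sub3}) already gives $k_G=j_G$ on $[w,w_0]$, so that~(\ref{ksv-sub4}) holds with constant $1$, is a valid sharpening not noted in the paper.
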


\begin{proof} (1) Fix  $x,y\in B^n(s)$ and the geodesic $\gamma$
of the hyperbolic metric joining them. Then  $\gamma \subset B^n(s)$ and for
all $w\in B^n(s)$ we have
$$ \frac{1}{1-|w|} < \frac{1+s}{2} \frac{2}{1-|w|^2} \,.
$$
Therefore, by Proposition \ref{k-equals-j} (4)
$$ k_{B^n}(x,y) \le \int_{\gamma} \frac{|dw|}{1-|w|} \le \frac{1+s}{2}
\int_{\gamma} \frac{2|dw|}{1-|w|^2}\le \frac{1+s}{2}\,
\rho_{B^n}(x,y) \le (1+s) j_{B^n}(x,y)\,.
$$
for $x,y\in B^n(s)$.
The inequality, $j_{B^n}(x,y) \le k_{B^n}(x,y)$, follows from (\ref{j-le-k}).

For the proof of (\ref{ksv-sub4}) set $B=B^n(w, \delta(w))\,.$ Then
by part (\ref{jk})
$$
k_G(x,y) \le k_B(x,y)\le (1+s) j_B(x,y) \le (1+s) j_G(x,y) \,.
$$

This completes the proof of the theorem.
\end{proof}

\begin{remark} Theorem \ref{newlem} (1) 
 refines the well-known inequality in \cite[Lemma 2.11]{Vu1} and \cite[Lemma 3.7(2)]{Vu} for the case of $B^n$. We have
been unable to prove a similar statement for a general domain.
However, a similar result for ${\mathbb R}^n \setminus \{0\}$
is obtained in the sequel (see Theorem~\ref{newlemma2}). To obtain this, we collect some basic properties.
\end{remark}

Martin and Osgood \cite{mo} proved the following explicit formula: for $x,y \in \Rn \setminus \{0\}$
\begin{equation}\label{kinpunctspace}
  k_{\Rn \setminus \{ 0 \}}(x,y) = \sqrt{\alpha^2 + \log^2 (|x|/|y|)},
\end{equation}
where $\alpha = \measuredangle (x,0,y)$.

We here introduce a lemma which is a modification of \cite[Lemma 4.9]{KlenB}.
\begin{lemma}\label{klenlemma}
Let  $z \in G = \Rn \setminus \{ 0 \}$ and $k_G(x,z) = k_G(y,z)$ with $|z| \le |x|,|y|$.
Then $\measuredangle (x,z,0) < \measuredangle (y,z,0)$ implies $|x-z| < |y-z|$.
\end{lemma}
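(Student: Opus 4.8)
The plan is to reduce to a planar configuration and then to show that, along the quasihyperbolic sphere $S=\{w\in G : k_G(w,z)=R\}$ with $R:=k_G(x,z)=k_G(y,z)$, restricted to the part $|w|\ge|z|$, the Euclidean distance $|w-z|$ is a strictly increasing function of the angle $\measuredangle(w,z,0)$.

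First I would use the rotational symmetry of $k_G$. By \eqref{kinpunctspace} the value $k_G(w,z)$ depends only on $|w|$ and $\measuredangle(w,0,z)$, while $|w-z|$ and $\measuredangle(w,z,0)$ are unchanged by rotations fixing $0$ and $z$. Hence I may rotate $x$ and $y$ into one plane $\Pi$ containing $0$ and $z$ without altering any quantity in the statement, and then work in $\Pi=\R^2$ with $z=(\rho,0)$, $\rho=|z|>0$.

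For $w\in\Pi$ with $r:=|w|\ge\rho$ and $\theta:=\measuredangle(w,0,z)$, formula \eqref{kinpunctspace} reads $k_G(w,z)=\sqrt{\theta^2+\log^2(r/\rho)}$, so the portion $r\ge\rho$ of $S$ is the one-parameter family $r=\rho e^{u}$, $u=\sqrt{R^2-\theta^2}$, with $\theta$ ranging over $(0,\min\{R,\pi\})$; geometrically this is just the right half of a round circle of radius $R$ in the logarithmic coordinates $(\log r,\theta)$. Setting $d=|w-z|$ and $\phi=\measuredangle(w,z,0)$, the law of cosines gives $d=\sqrt{r^2+\rho^2-2r\rho\cos\theta}$ and $\cos\phi=(\rho-r\cos\theta)/d$. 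It therefore suffices to prove that $d$ and $\phi$ are both strictly decreasing in $\theta$: then $d$ is a strictly increasing function of $\phi$, and since $\phi_x<\phi_y$ forces $\theta_x>\theta_y$ we obtain $d_x<d_y$, as claimed.

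Finally I would differentiate along the family, using $r'=-r\theta/u$. A direct computation reduces the sign of $\tfrac{d}{d\theta}(d^2)$ to that of $-\bigl(\theta e^{u}-\theta\cos\theta-u\sin\theta\bigr)$, and the sign of $\tfrac{d}{d\theta}(\cos\phi)$ to that of $\sin\theta\,\bigl[\theta\rho\sin\theta+u(r-\rho\cos\theta)\bigr]$. The latter bracket is manifestly positive for $\theta\in(0,\pi)$, so $\cos\phi$ increases and $\phi$ decreases. The main obstacle is the former inequality $\theta e^{u}-\theta\cos\theta-u\sin\theta>0$; I expect to settle it with the elementary bound $e^{u}\ge 1+u$, which gives $\theta e^{u}-\theta\cos\theta-u\sin\theta\ge\theta(1-\cos\theta)+u(\theta-\sin\theta)>0$ for $\theta\in(0,\pi)$ and $u\ge0$. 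This yields the monotonicity of $d$, and the boundary values of $\theta$ follow by continuity, completing the proof.
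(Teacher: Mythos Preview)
Your argument is correct and follows essentially the same route as the paper: reduce by rotational symmetry to $n=2$ with $z$ on the positive axis, parametrize the half of the quasihyperbolic sphere with $|w|\ge|z|$, and show monotonicity of $|w-z|$ via the elementary estimate $e^u\ge 1+u$ (the paper parametrizes by $s=\log(|w|/|z|)$ instead of $\theta$ and writes the same inequality as $e^s-\cos\phi(s)-s\sin\phi(s)/\phi(s)>0$). The one place you are more careful than the paper is in explicitly checking that $\phi=\measuredangle(w,z,0)$ is strictly monotone along the curve; the paper asserts that this angle determines $w$ uniquely without verifying the monotonicity, so your extra computation with $\cos\phi=(\rho-r\cos\theta)/d$ is a genuine (and welcome) addition.
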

\begin{proof}
 Let $k_G(x,z)=r$. By \eqref{kinpunctspace} the angle $\measuredangle (x,z,0)$ determines the point
$x$ uniquely up to a rotation about the line through 0 and $z$. By symmetry and similarity
it is sufficient to consider only the case $n = 2$ and $z = e_1$. We will show that the function
  $$
    f(s) = |x(s)-e_1|^2
  $$
  is strictly increasing on $(0,\min \{ r,\pi \})$, where
$$x(s) = (e^s \cos \phi(s),e^s \sin \phi(s))\quad\mbox{ with }
\varphi(s)=\sqrt{\min \{ r,\pi \}^2-s^2}\,.
$$
For $s \in [0,\min \{ r,\pi \}]$, a simple calculation gives
  $$
    f(s) = |x(s)|^2+1-2|x(s)| \cos \phi(s) = e^{2s}+1-2 e^s \cos \phi(s)
  $$
   and hence
  $$
    f'(s) = 2 e^s \left( e^s-\cos \phi(s)-\frac{s \sin \phi(s)}{\phi(s)} \right).
  $$

  If $s \in (0,\min \{ r,\pi \})$, then we see that
  $$
    e^s-\cos \phi(s)-\frac{s \sin \phi(s)}{\phi(s)} \ge e^s-\cos \phi(s)-s \ge e^s-1-s > 0
  $$
  and hence $f'(s) > 0$ is implying the assertion.
\end{proof}

\begin{theorem}\label{newlemma2}
  Let $G = \Rn \setminus \{0\}$. Then
  \begin{enumerate}
    \item for $\alpha \in (0,\pi]$ and $x,y \in G$ with $\measuredangle (x,0,y) \le \alpha$
    \[
      k_G(x,y) \le \frac{\alpha}{\log (1+2\sin(\alpha/2))}j_G(x,y) \le (1+\alpha)j_G(x,y).
    \]
    \item for $\varepsilon > 0$, $x \in G$ and $y \in B^n(|x|/t) \cup (\Rn \setminus \overline{B(t|x|)})$
    \[
      k_G(x,y) \le (1+\varepsilon)j_G(x,y),
    \]
    where $t=\exp((1+1/\varepsilon) \log 3)$.
  \end{enumerate}
\end{theorem}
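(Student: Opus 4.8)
The plan is to reduce both parts to one-variable estimates by combining the Martin--Osgood formula \eqref{kinpunctspace} with the explicit expression for the distance ratio metric. Since $\partial G=\{0\}$ we have $\delta_G=|\cdot|$, so for $x,y\in G$, writing $\beta=\measuredangle(x,0,y)\in[0,\pi]$ and, after the harmless normalization $|x|\le|y|$ and $|x|=1$, setting $\ell=\log|y|\ge 0$ and $\rho=e^{\ell}$, we have $k_G(x,y)=\sqrt{\beta^2+\ell^2}$ and $j_G(x,y)=\log\bigl(1+\sqrt{1+\rho^2-2\rho\cos\beta}\bigr)$. Both parts then amount to bounding the ratio $k_G/j_G$ from above on a region of the $(\beta,\ell)$-parameter space, and throughout I would use the $x\leftrightarrow y$ symmetry of both metrics to assume $|x|\le|y|$.

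For part (1) I would first fix the value $r=k_G(x,y)$ and move $y$ along the quasihyperbolic sphere $\{z:k_G(x,z)=r,\ |z|\ge|x|\}$. The computation inside the proof of Lemma \ref{klenlemma} parametrizes exactly this sphere by $s=\ell$ with $\beta=\sqrt{\min\{r,\pi\}^2-s^2}$ and shows $|x-y|^2$ is strictly increasing in $\ell$, hence strictly decreasing in $\beta$. Consequently, on each level set of $k_G$ the quantity $j_G$ is minimized — equivalently $k_G/j_G$ is maximized — at the largest admissible angle $\beta=\min\{r,\alpha\}$. This confines the supremum of $k_G/j_G$ over the feasible region $\{\beta\le\alpha,\ \ell\ge 0\}$ to the union of the segment $\{\ell=0,\ 0<\beta\le\alpha\}$ and the ray $\{\beta=\alpha,\ \ell\ge 0\}$. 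On the segment ($\rho=1$) one has $|x-y|=2\sin(\beta/2)$, so $k_G/j_G=g(\beta)$ with $g(\beta)=\beta/\log(1+2\sin(\beta/2))$; I would show $g$ is increasing on $(0,\pi]$ by a one-variable derivative check, so there $k_G/j_G\le g(\alpha)$, the asserted constant.

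The main obstacle is the ray $\beta=\alpha$. Writing $D(\ell)=\sqrt{1+e^{2\ell}-2e^{\ell}\cos\alpha}$, the required bound $k_G/j_G\le g(\alpha)$ becomes
\[
  \frac{\log(1+D(\ell))}{\sqrt{\alpha^2+\ell^2}}\ \ge\ \frac{\log(1+2\sin(\alpha/2))}{\alpha}\qquad(\ell\ge 0),
\]
with equality at $\ell=0$. The clean surrogate $\sqrt{\alpha^2+\ell^2}\le\alpha+\ell$ is too lossy here (it has the wrong slope at $\ell=0$), so I would keep the exact $\sqrt{\alpha^2+\ell^2}$ and prove the left-hand function is nondecreasing in $\ell$ by differentiating and reducing to an elementary inequality; the correct asymptotics ($\log(1+D(\ell))\sim\ell$ together with $\log(1+2\sin(\alpha/2))<\alpha$, using $\log(1+u)\le u$ and $\sin(\alpha/2)\le\alpha/2$) make this plausible and give room at infinity. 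The secondary estimate $g(\alpha)\le 1+\alpha$ follows from $\log(1+2\sin(\alpha/2))\ge\alpha/(1+\alpha)$, again an elementary one-variable inequality.

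For part (2) the separation hypothesis means $L:=|\log(|y|/|x|)|>\log t=(1+1/\varepsilon)\log 3$, and after assuming $|y|\ge|x|$ we get the two crude but decisive bounds $j_G(x,y)\ge\log(|y|/|x|)=L$ (from $|x-y|\ge|y|-|x|$) and $k_G(x,y)=\sqrt{\beta^2+L^2}\le\sqrt{\pi^2+L^2}$ (from $\beta\le\pi$). Hence $k_G/j_G\le\sqrt{\pi^2+L^2}/L$, which is decreasing in $L$, so it suffices that $\sqrt{\pi^2+(\log t)^2}\le(1+\varepsilon)\log t$, i.e. $\log t\ge\pi/\sqrt{\varepsilon^2+2\varepsilon}$. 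The final step is to check that the prescribed $t=\exp((1+1/\varepsilon)\log 3)$ meets this threshold, which is the elementary inequality $(1+1/\varepsilon)\log 3\ge\pi/\sqrt{\varepsilon^2+2\varepsilon}$; minimizing the relevant quotient in $\varepsilon$ shows it holds comfortably for all $\varepsilon>0$. I expect the only genuine difficulty in the whole argument to be the monotonicity on the ray in part (1); everything else is bookkeeping around the Lemma and routine calculus.
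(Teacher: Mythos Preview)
Your reduction in part~(1) via Lemma~\ref{klenlemma} is exactly the paper's starting point: fix the level set $k_G=c$, then $|x-y|$ (hence $j_G$) is monotone in the angle, so $k_G/j_G$ is maximized at the largest admissible angle. The paper, however, simply asserts that this maximum occurs at $|y|=|x|$ and plugs in the angle $\alpha$, which tacitly assumes the constrained maximum always sits on the segment $\{\ell=0\}$. You go further and correctly split off the second possibility, the ray $\{\beta=\alpha,\ \ell>0\}$, which arises whenever $c>\alpha$; this is a genuine case the paper's written argument does not address. Your flagged obstacle---showing $\log(1+D(\ell))/\sqrt{\alpha^2+\ell^2}$ is nondecreasing---is precisely the missing ingredient, and while the asymptotics you cite make it plausible, it is not proved in either your proposal or the paper; that is the one real gap.

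For the second inequality in~(1) your route differs from the paper's: they bound $\log(1+u)\ge u/(1+u/2)$, pass to $h(\alpha)=(\alpha/2)(1+1/\sin(\alpha/2))$, and then use convexity of $h$ on $[0,\pi]$ to get the linear upper bound $1+(1-1/\pi)\alpha\le 1+\alpha$. Your proposed direct inequality $\log(1+2\sin(\alpha/2))\ge\alpha/(1+\alpha)$ is equivalent to the goal and is a reasonable alternative, though note both sides agree to first order at $\alpha=0$, so the derivative check needs second-order information.

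For part~(2) your argument is cleaner than the paper's. The paper specializes to the antipodal configuration $y=-ux$ and manipulates the explicit ratio (with some inequality signs that point the wrong way as written), whereas your two crude bounds $j_G\ge L=\lvert\log(|y|/|x|)\rvert$ and $k_G\le\sqrt{\pi^2+L^2}$ handle all $y$ in the region at once and reduce everything to the elementary threshold $(1+1/\varepsilon)\log 3\ge\pi/\sqrt{\varepsilon^2+2\varepsilon}$, avoiding any extremality claim for the antipodal direction.
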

\begin{proof}
  (1) We may assume that $|y| \ge |x|$. Fix $k_G(x,y) = c > 0$. Now $j_G(x,y) = \log (1+|x-y|/|x|)$
and by Lemma \ref{klenlemma} the quantity $k_G(x,y)/j_G(x,y)$ attains its maximum when $\alpha$ is maximal,
which is equivalent to $|y| = |x|$. Thus,
  \[
    \frac{k_G(x,y)}{j_G(x,y)} \le \frac{\alpha}{\log \left( 1+\frac{2 |x| \sin(\alpha/2)}{|x|} \right)}
= \frac{\alpha}{\log \left( 1+2 \sin \frac{\alpha}{2} \right)}
  \]
  and the first inequality follows.

  Let us next prove the second inequality. We define the functions $f$ and $g$ by
$$f(x) = \log (1+x)~~\mbox { and }~~ g(x) = x/(1+x/2)\,.
$$
Because
  \[
    g'(x) = \frac{4}{(2+x)^2} \le \frac{1}{1+x} = f'(x),
  \]
  $g'(x) > 0$ for $x \ge 0$ and $f(0) = 0 = g(0)$, we have $g(x) \le f(x)$. Thus
  \[
    \frac{\alpha}{\log (1+2 \sin (\alpha / 2))} \le \frac{\alpha}{\frac{2 \sin (\alpha / 2)}{1+ \sin (\alpha / 2)}}
 = \frac{\alpha}{2} \left( 1+\frac{1}{\sin (\alpha / 2)} \right).
  \]
  The function $h(\alpha) = (\alpha /(2) \left( 1+1/(\sin (\alpha / 2)) \right)$ is convex, since
  \[
    h''(\alpha) = \frac{\alpha (3+\cos \alpha)-4 \sin \alpha}{16 \sin^3 (\alpha / 2)} \ge 0.
  \]
  Therefore, $h(\alpha) \le \max \{ h(0),h(\pi) \} = \pi$ on $[0,\pi]$ and $h(\alpha) \le 1+(1-1/\pi)\alpha \le 1+ \alpha$
both imply the assertion.

  (2) We prove that
    \[
      k_G(x,-u x) \le (1+\varepsilon)j_G(x,-u x),
    \]
    where $u \in (0,1/t]$ or $u > t$. We may assume $x = e_1$. Now
  \[
    \left( \frac{k_G(x,y)}{j_G(x,y)} \right)^2 = \frac{\pi^2+\log^2 (1/u)}{\log^2 ((|x|+u|x|+u)/u)}
\ge \frac{\log^2 (1/u)}{\log^2 (3/u)} = A
  \]
  and $A\ge 1+\varepsilon$ is equivalent to $u \le 1/t$ or $u \ge t$. The assertion follows from \eqref{kinpunctspace}.
\end{proof}

\begin{remark} \label{rem-intro-1}
\begin{enumerate}
\item  In Theorem \ref{newlemma2} (1), the constant $h(\alpha) = \alpha / \log(1+2 \sin (\alpha/2))$ appears with the bound
$h(\alpha) \le 1+\alpha$. This upper bound of $h(\alpha)$ is not sharp as can be seen from the proof. By computer simulations,
 we obtained that the sharp upper bounds are $h(\alpha) \le 1+((1/\log 3)-(1/\pi)) \alpha$ for
$\alpha \in [0,\pi]$ and $h(\alpha) \le 1+\pi\alpha/(2 \log(1+\sqrt{2}))$ for $\alpha \in [0,\pi/2]$.
Lind\'en \cite{Lin} proved the limiting case $\alpha=\pi $ of Theorem \ref{newlemma2} (1) with
  the constant $c_0\equiv \pi/\log(3)\,.$  For $c \in (1, c_0)\,,$ some of the level sets
$L(c) = \{z: k_G(z,1)/j_G(z,1)=c \}$ are displayed in Figure~1.

\item\label{rem-intro} Let $D\subset \Rn$ be a domain, and let $G=D\setminus\{x_0\}$ with $x_0\in D.$ For given $x,y\in G$ if there exists some constant $c\geq 1$ such that $\min\{d_D(x),d_D(y)\}\leq c \min\{|x-x_0|,|y-y_0|\}$, then by the definition of $j$-metric we have $j_G(x,y)\leq c j_D(x,y)$. We also see from \cite[Lemma 2.53]{Vu2} that $k_G(x,y)\leq c_1(c) k_D(x,y)$ with $c_1(c)$ depending only on $c$.

\end{enumerate}
\end{remark}

\begin{figure}[!ht]
\centering
\includegraphics[width=6cm]{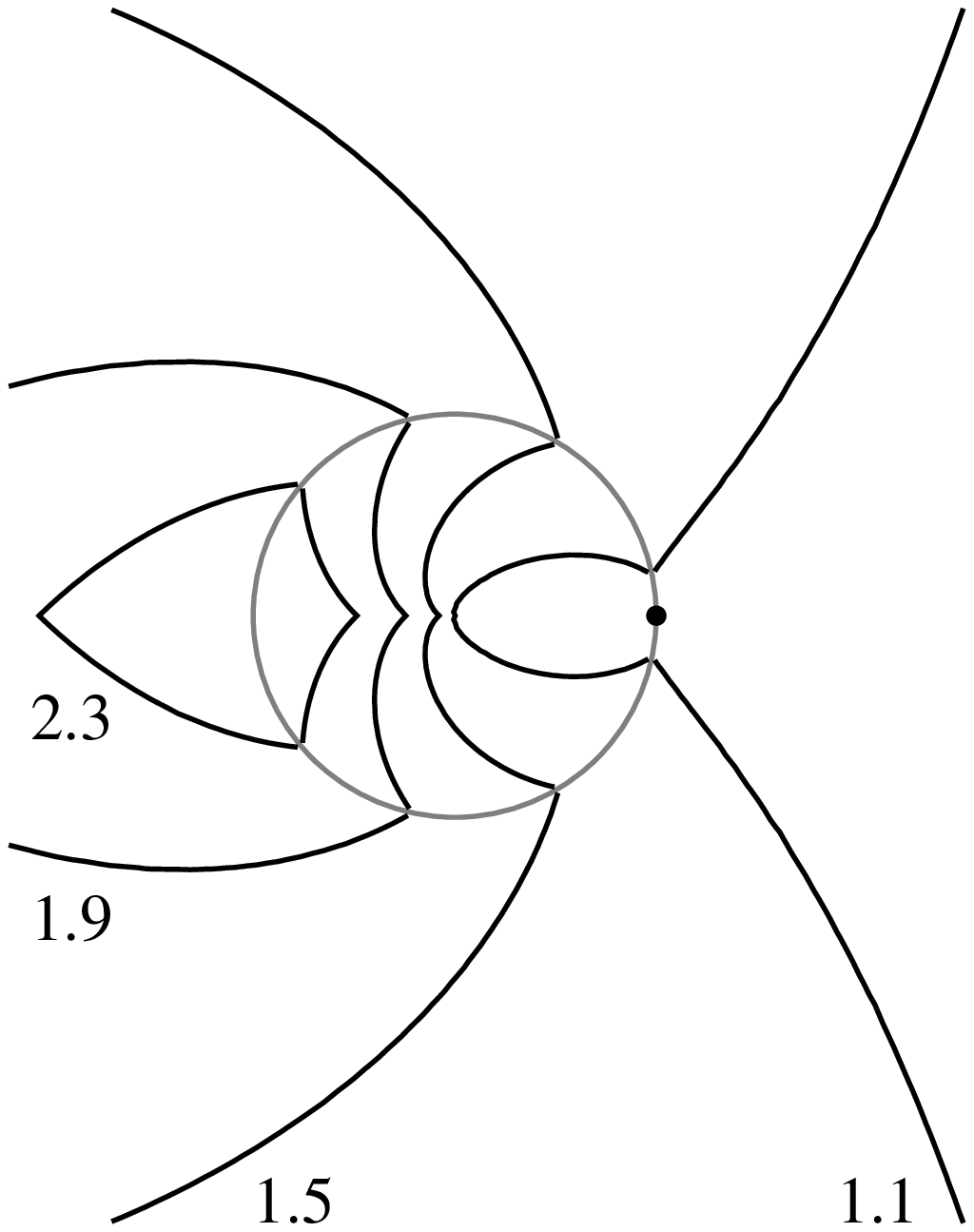}\hspace{5mm}
\includegraphics[width=6cm]{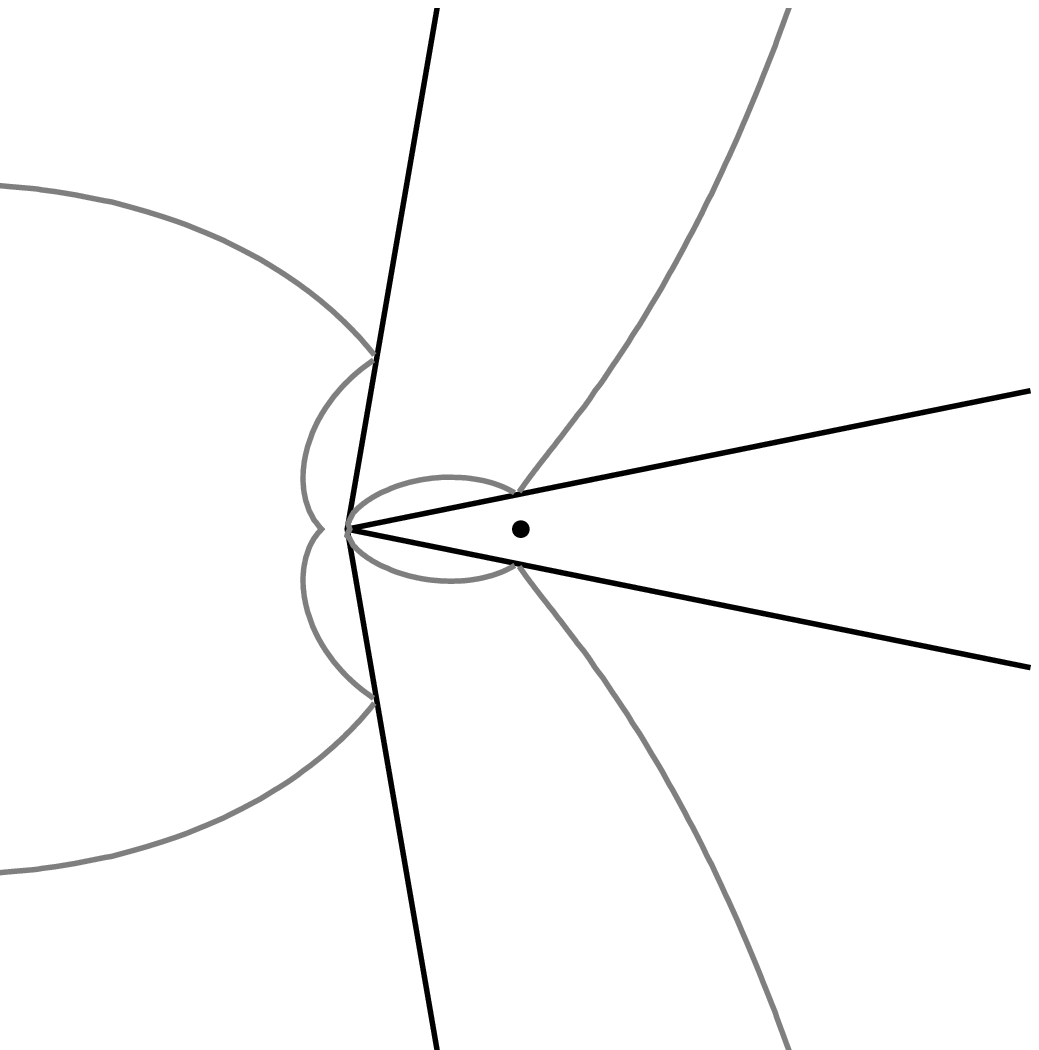}
\caption{Left: Level sets $L(c) = \{z: k_G(z,1)/j_G(z,1)=c \}$ for $G = \Rn \setminus \{ 0 \}$ and
$c = 1.1,\,1.5,\,1.9,\,2.3$. Right: Level sets $L(c)$ and angular domains as in Theorem \ref{newlemma2} (1) for $c=0.2,\,1.4$.}
\end{figure}

\section{Results concerning Problem \ref{prob2}}\label{sec-3}


In this section, our main goal is to study Problem \ref{prob2}, that is, to compare the Euclidean metric and the quasihyperbolic
metrics defined in a domain. Our main result is Theorem \ref{Jung-appl}.

In the next lemma, we recall a sharp inequality for the hyperbolic metric of the unit ball proved in
\cite[(2.27)]{Vu}.

\begin{lemma} \label{rhoineq}
For $x,y\in B^n$, let $t$ be as in $(\ref{rhodef})$. Then
$$ \tanh^2\frac{\rho_{B^n}(x,y)}2=\frac{|x-y|^2}{|x-y|^2+t^2}\, ,
$$
$$ |x-y|\leqslant 2\tanh\frac{\rho_{B^n}(x,y)}{4}=
\frac{2|x-y|}{\sqrt{|x-y|^2+t^2}+t}\, ,
$$
where equality holds for $x=-y$.
\end{lemma}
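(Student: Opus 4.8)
The first identity I would obtain directly from \eqref{rhodef}. Writing $\rho = \rho_{B^n}(x,y)$, the relation $\sinh(\rho/2) = |x-y|/t$ gives $\cosh^2(\rho/2) = 1 + |x-y|^2/t^2$, so that
\[
\tanh^2\frac{\rho}{2} = \frac{\sinh^2(\rho/2)}{\cosh^2(\rho/2)} = \frac{|x-y|^2/t^2}{1 + |x-y|^2/t^2} = \frac{|x-y|^2}{|x-y|^2 + t^2},
\]
which is exactly the first claim.

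For the displayed equality in the second line I would apply the half-angle formula $\tanh(s/2) = \sinh s/(1+\cosh s)$ with $s = \rho/2$. From \eqref{rhodef} one has $\sinh(\rho/2) = |x-y|/t$, and taking the positive square root of the identity above gives $\cosh(\rho/2) = \sqrt{|x-y|^2+t^2}/t$. Substituting these two expressions then yields
\[
2\tanh\frac{\rho}{4} = \frac{2\sinh(\rho/2)}{1+\cosh(\rho/2)} = \frac{2|x-y|/t}{1 + \sqrt{|x-y|^2+t^2}/t} = \frac{2|x-y|}{t + \sqrt{|x-y|^2+t^2}},
\]
which is the asserted closed form.

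It then remains to prove the inequality $|x-y| \le 2\tanh(\rho/4)$. Using the expression just obtained and cancelling $|x-y|$ (the case $x=y$ being trivial), the plan is to reduce this to the equivalent statement $t + \sqrt{|x-y|^2+t^2} \le 2$. Since the arithmetic--geometric mean inequality applied to $1-|x|^2$ and $1-|y|^2$ shows $0 < t \le 1$, the quantity $2-t$ is positive and squaring further reduces the claim to $|x-y|^2 \le 4(1-t)$. This last estimate is the heart of the matter and the step I expect to be the main obstacle. I would handle it by first bounding $|x-y| \le |x|+|y|$ by the triangle inequality, and then invoking the arithmetic--geometric mean inequality a second time in the form $4t = 4\sqrt{(1-|x|^2)(1-|y|^2)} \le 2(1-|x|^2) + 2(1-|y|^2)$, which together give
\[
|x-y|^2 + 4t \le (|x|+|y|)^2 + 4 - 2|x|^2 - 2|y|^2 = 4 - (|x|-|y|)^2 \le 4.
\]
Finally, tracking the equality conditions in the three estimates used (the triangle inequality and the two applications of the arithmetic--geometric mean inequality) forces $|x| = |y|$ together with $x$ and $y$ lying on a common line through the origin in opposite directions, that is $x = -y$, which is precisely the stated equality case.
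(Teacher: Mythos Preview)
Your argument is correct in every step: the first identity follows immediately from $\cosh^2 = 1+\sinh^2$, the closed form for $2\tanh(\rho/4)$ from the half-angle formula, and the reduction of the inequality to $|x-y|^2+4t\le 4$ together with the chain
\[
|x-y|^2+4t \le (|x|+|y|)^2 + 2(1-|x|^2)+2(1-|y|^2) = 4-(|x|-|y|)^2 \le 4
\]
is clean; the equality analysis is also right.

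The paper does not actually prove this lemma---it merely recalls it from \cite[(2.27)]{Vu}---so there is no proof in the paper to compare against. Your self-contained derivation is a perfectly good substitute and is likely close in spirit to the original argument in \cite{Vu}, since the identities leave little room for variation and the key estimate $|x-y|^2\le 4(1-t)$ is most naturally obtained exactly as you did.
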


Earle and Harris \cite{EH} provided several applications of this
inequality and extended this inequality to other metrics such as the
Carath\'eodory metric. Notice that Lemma \ref{rhoineq} gives a sharp bound for
the modulus of continuity
\begin{equation*} 
id:\, (B^n,\rho_{B^n})\to (B^n,|\cdot|\,) \,.
\end{equation*}
For a $K$-quasiconformal homeomorphism
$$f: (B^n, \rho_{B^n}) \to (B^n, \rho_{B^n})$$
an upper bound for the modulus of continuity is well-known, see
\cite[Theorem 11.2]{Vu}. For $n=2$ the result is sharp for each
$K\ge 1$, see \cite[p. 65 (3.6)]{lv}. The particular case $K=1$
yields a classical Schwarz lemma.


As a preliminary step we record Jung's Theorem
\cite[Theorem~11.5.8]{Ber} which gives a sharp bound for the radius
of a Euclidean ball containing a given bounded domain.

\begin{lemma}\label{Jung-thm}
Let $G\subset \Rn$ be a domain with ${\rm diam}\,G < \infty$.
Then there exists $z\in \Rn$
such that $G\subset B^n(z,r)$, where $r\le \sqrt{n/(2n+2)}\,{\diam}\, G$.
\end{lemma}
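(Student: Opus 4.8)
The plan is to reduce the claim to the minimal enclosing ball of $\overline{G}$ and then to the classical Jung computation. Since $\diam\,G<\infty$, the closure $\overline{G}$ is compact, so among all closed balls containing $\overline{G}$ there is one of smallest radius; call it $B^n(z,r)$ and write $d=\diam\,G$. The only task is to bound $r$ in terms of $d$, and the mechanism will be that the center $z$ cannot be too far from the points of $\overline{G}$ lying on the bounding sphere.

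First I would introduce the contact set $C=\overline{G}\cap S^{n-1}(z,r)$, which is nonempty by minimality of $r$, and prove the key geometric fact that $z\in\mathrm{conv}(C)$. This is the crux of the argument. If $z\notin\mathrm{conv}(C)$, then by the separating hyperplane theorem there is a unit vector $v$ with $\ip{p-z}{v}>0$ for every $p\in C$. Moving the center to $z+\varepsilon v$ then strictly decreases the distance to each contact point, since $|p-(z+\varepsilon v)|^2=r^2-2\varepsilon\ip{p-z}{v}+\varepsilon^2$; using the compactness of $C$ (to obtain a uniform gain near $C$) together with a uniform slack bound $|q-z|\le r-\delta$ for $q$ bounded away from $C$, one checks that for small $\varepsilon>0$ the whole of $\overline{G}$ lies in a ball of radius strictly less than $r$, contradicting minimality. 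Hence $z\in\mathrm{conv}(C)$, and by Carath\'eodory's theorem $z=\sum_{i=1}^{m}\lambda_i p_i$ with $p_1,\dots,p_m\in C$, $m\le n+1$, $\lambda_i\ge 0$ and $\sum_i\lambda_i=1$.

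The remaining step is purely algebraic. After translating so that $z=0$, we have $|p_i|=r$, $|p_i-p_j|\le d$, and $\sum_i\lambda_i p_i=0$. Expanding $0=\bigl|\sum_i\lambda_i p_i\bigr|^2$ and substituting the polarization identity $\ip{p_i}{p_j}=r^2-\tfrac12|p_i-p_j|^2$ yields
$$ r^2=\frac12\sum_{i,j}\lambda_i\lambda_j\,|p_i-p_j|^2\le\frac{d^2}{2}\sum_{i\ne j}\lambda_i\lambda_j=\frac{d^2}{2}\Big(1-\sum_i\lambda_i^2\Big). $$
Since $\sum_i\lambda_i=1$ and $m\le n+1$, the Cauchy--Schwarz inequality gives $\sum_i\lambda_i^2\ge 1/m\ge 1/(n+1)$, so $1-\sum_i\lambda_i^2\le n/(n+1)$ and therefore $r^2\le\frac{n}{2(n+1)}\,d^2$, that is, $r\le\sqrt{n/(2n+2)}\,\diam\,G$, as required.

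I expect the convex-hull containment $z\in\mathrm{conv}(C)$ to be the only genuinely delicate point; the existence of a smallest enclosing ball and the closing estimate are routine once that fact is in place.
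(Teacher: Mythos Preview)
Your argument is correct and is essentially the standard proof of Jung's theorem: existence of the minimal enclosing ball of $\overline{G}$, the separation argument showing that the center lies in the convex hull of the contact set, Carath\'eodory to reduce to at most $n+1$ contact points, and the expansion of $\bigl|\sum_i\lambda_i p_i\bigr|^2$ combined with $\sum_i\lambda_i^2\ge 1/(n+1)$ to conclude. One small point you left implicit: since $G$ is open, no point of $G$ can lie on the sphere $S^{n-1}(z,r)$ of the minimal enclosing closed ball, so in fact $G\subset B^n(z,r)$ with the \emph{open} ball, as the statement requires.

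The paper, however, does not prove this lemma at all: it is simply quoted from Berger's \emph{Geometry~I} (Theorem~11.5.8) as a preliminary fact. So there is no ``paper's proof'' to compare against; you have supplied a self-contained proof where the authors chose to cite the literature.
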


\begin{theorem}\label{Jung-appl}
\begin{enumerate}
\item\label{ksv-sub22} If $x,y\in B^n$ are arbitrary and $w=|x-y|\,e_1/2$, then
$$k_{B^n}(x,y)\ge k_{B^n}(-w,w)=2\,k_{B^n}(0,w)=
2\,\log \frac{2}{2-|x-y|}\ge |x-y|\,,
$$
where the first inequality becomes equality when $y=-x$.
Moreover, the identity map $id:\, (B^n,k_{B^n})\to (B^n,|.|)$ has the sharp modulus of
continuity $\omega(t)=2(1-e^{-t/2})$.
\item\label{ksv-sub23} Let $G\psubset \Rn$ be a domain with
${\rm diam}\,G < \infty$ and $r=\sqrt{n/(2n+2)}\,{\diam}\, G$.
Then we have 
$$k_G(x,y)\ge 2\,\log \frac{2}{2-t}\ge t=|x-y|/r\,,
$$
for all distinct $x,y\in G$ with equality in the first step
when $G=B^n(z,r)$ and $z=(x+y)/2$.
Moreover, the identity map $id:\, (G,k_{G})\to (G,|.|)$ has the sharp modulus of
continuity $\omega(t)=2r(1-e^{-t/2})$.
\end{enumerate}
\end{theorem}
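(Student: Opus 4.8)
The plan is to reduce the second statement to the first and to concentrate on Theorem~\ref{Jung-appl}~(\ref{ksv-sub22}), whose only non-routine ingredient is the sharp lower bound
\[
  k_{B^n}(x,y)\ \ge\ 2\log\frac{2}{2-|x-y|}.
\]
The two equalities $k_{B^n}(-w,w)=2\,k_{B^n}(0,w)=2\log\frac{2}{2-|x-y|}$ are immediate: since $|w|=|x-y|/2$, Proposition~\ref{k-equals-j}~(\ref{ksv-sub1}) gives $k_{B^n}(0,w)=\log\frac{1}{1-|x-y|/2}=\log\frac{2}{2-|x-y|}$, and the diameter through $-w$ and $w$ is a $k_{B^n}$-geodesic on which the distance is additive (the Remark following Proposition~\ref{k-equals-j}). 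So everything rests on the displayed inequality together with the elementary estimate $2\log\frac{2}{2-t}\ge t$.

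To prove the displayed inequality I would construct a single auxiliary function that is $1$-Lipschitz from $(B^n,k_{B^n})$ to $(\R,|\cdot|)$ and is adapted to the \emph{direction} of the pair. Put $e=(x-y)/|x-y|\in S^{n-1}$ and
\[
  U(t)=-\,\mathrm{sgn}(t)\,\log(1-|t|),\qquad u(z)=U(\langle z,e\rangle),\quad z\in B^n.
\]
Then $U$ is $C^1$ with $U'(t)=1/(1-|t|)>0$, so $\nabla u(z)=U'(\langle z,e\rangle)\,e$ and $|\nabla u(z)|=1/(1-|\langle z,e\rangle|)$. By Cauchy--Schwarz $|\langle z,e\rangle|\le|z|$, whence $|\nabla u(z)|\le 1/(1-|z|)=1/\delta_{B^n}(z)$, and integrating $|\nabla u|$ along any rectifiable path from $x$ to $y$ yields $|u(x)-u(y)|\le k_{B^n}(x,y)$. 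The key point, and the step one is tempted to get wrong, is that the correct quantity to project onto is the \emph{linear} functional $\langle z,e\rangle$ rather than the radius $|z|$: a radial test function only recovers the weak bound $k_{B^n}(x,y)\ge|\log\frac{1-|x|}{1-|y|}|$, which is far from sharp.

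With the directional choice the argument closes sharply. Since $\langle x,e\rangle-\langle y,e\rangle=\langle x-y,e\rangle=|x-y|=:s$ and $U$ is increasing,
\[
  |u(x)-u(y)|=\int_{\langle y,e\rangle}^{\langle x,e\rangle}\frac{dt}{1-|t|},
\]
an integral of the symmetric convex weight $1/(1-|t|)$ over a subinterval of $(-1,1)$ of fixed length $s<2$. Such an integral is minimized exactly when the interval is centred at the origin, the centred value being $2\int_0^{s/2}dt/(1-t)=2\log\frac{2}{2-s}$; hence $k_{B^n}(x,y)\ge 2\log\frac{2}{2-s}$, with equality precisely for the centred diametral configuration $y=-x$. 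The last inequality $2\log\frac{2}{2-s}\ge s$ follows since $g(s)=(2-s)e^{s/2}$ has $g(0)=2$ and $g'(s)=-\tfrac12 s\,e^{s/2}\le0$, so $g(s)\le2$. Finally, inverting $t=k_{B^n}(x,y)\ge 2\log\frac{2}{2-|x-y|}$ gives $|x-y|\le 2(1-e^{-t/2})=\omega(t)$, sharp on diametral pairs, which is the claimed modulus of continuity.

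For Theorem~\ref{Jung-appl}~(\ref{ksv-sub23}) I would transport part~(\ref{ksv-sub22}) to a general bounded domain. By Jung's theorem (Lemma~\ref{Jung-thm}) there is $z\in\Rn$ with $G\subset B^n(z,r)$, $r=\sqrt{n/(2n+2)}\,\diam G$; monotonicity of $k$ under inclusion gives $k_G(x,y)\ge k_{B^n(z,r)}(x,y)$ for $x,y\in G$. As $k$ is invariant under the similarity $\zeta\mapsto(\zeta-z)/r$, setting $x'=(x-z)/r,\ y'=(y-z)/r\in B^n$ we have $k_{B^n(z,r)}(x,y)=k_{B^n}(x',y')$ and $|x'-y'|=|x-y|/r=t$, so part~(\ref{ksv-sub22}) yields $k_G(x,y)\ge 2\log\frac{2}{2-t}\ge t$, with equality in the first step exactly when $G=B^n(z,r)$ and $x',y'$ are antipodal, i.e.\ $z=(x+y)/2$. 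Inverting the same chain (now with the extra factor $r$) gives the sharp modulus of continuity $\omega(t)=2r(1-e^{-t/2})$ for $id:(G,k_G)\to(G,|\cdot|)$. The only point needing a line of care is the admissible range: since $t=|x-y|/r\le\diam G/r=\sqrt{2+2/n}\le2$, one has $t<2$ for $n\ge2$ (and for $n=1$ away from the degenerate case $|x-y|=\diam G$), so the bound of part~(\ref{ksv-sub22}) applies.
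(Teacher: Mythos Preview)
Your proof is correct and takes a genuinely different route from the paper's.

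For part~(\ref{ksv-sub22}) the paper argues by case analysis: first it treats the situation where $x$ and $y$ lie on a common diameter of $B^n$ (splitting further according to whether $0\in[x,y]$ or $y\in[x,0]$) and reduces the inequality $k_{B^n}(x,y)\ge k_{B^n}(-w,w)$ to the elementary algebraic fact $(|x|-|y|)^2\ge0$; then for arbitrary $x,y$ it replaces $y$ by a point $y'$ on the diameter through $x$ with $|x-y'|=|x-y|$, claims $k_{B^n}(x,y)\ge k_{B^n}(x,y')$, and invokes the diameter case. Part~(\ref{ksv-sub23}) is handled exactly as you do, via Jung's theorem, domain monotonicity, and rescaling.

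Your approach bypasses the case split entirely by exhibiting an explicit $1$-Lipschitz potential $u(z)=U(\langle z,e\rangle)$ adapted to the direction $e=(x-y)/|x-y|$, which gives $k_{B^n}(x,y)\ge|u(x)-u(y)|$ in one stroke; the symmetry and convexity of $t\mapsto 1/(1-|t|)$ then force the minimum of $|u(x)-u(y)|$ over all configurations with fixed separation $s=|x-y|$ to be the centred value $2\log\frac{2}{2-s}$. This is cleaner and more conceptual: it identifies the extremal comparison function directly, yields the equality characterisation $y=-x$ automatically, and avoids the reduction step $k_{B^n}(x,y)\ge k_{B^n}(x,y')$ that the paper calls ``trivial'' but which in fact encodes a monotonicity of quasihyperbolic spheres along the axis of symmetry. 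The paper's argument, by contrast, is more elementary in that it needs no test-function idea, only explicit geodesic formulas on diameters.
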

\begin{proof}
(1)
Without loss of generality, we assume that
$|x|\ge |y|$. 
We divide the proof into two cases.\\
{\it Case I:} The points $x$ and $y$ are both on a diameter of $B^n$.\\
If $0\in [x,y]$, by Proposition~\ref{k-equals-j}(\ref{ksv-sub1})
we have
$$k_{B^n}(x,y)=k_{B^n}(x,0)+k_{B^n}(0,y)=\log\frac{1}{(1-|x|)(1-|y|)}\,,
$$
and hence
$$k_{B^n}(-w,w) 
=2\log\frac{1}{1-|w|}\,.
$$
It is easy to verify that $k_{B^n}(x,y)\ge k_{B^n}(-w,w)$
is equivalent to $(|x|-|y|)^2\ge 0$\,.

If $y\in [x,0]$, then the proof goes in a similar way. Indeed,
in this situation
$$k_{B^n}(x,y)=\log\frac{1-|y|}{1-|x|}\ge k_{B^n}(-w,w)
$$
is equivalent to
$$(|x|-|y|)\left(1-\frac{1}{1-|y|}\right)\le \left(\frac{|x|-|y|}{2}\right)^2,
$$
which is trivial as the left hand term is $\le 0$. Equality clearly holds if $y=-x$.\\
{\it Case II:} The points $x$ and $y$ are arbitrary in $B^n$.\\
Choose $y'\in B^n$ such that $|x-y|=|x-y'|=2|w|$ with $x$ and $y'$ on a diameter of $B^n$. Then
$$k_{B^n}(x,y)\ge k_{B^n}(x,y')\ge k_{B^n}(-w,w)\,,
$$
where the first inequality holds trivially and the second holds by {\em Case I}. 
The sharp modulus of continuity can be obtained by a trivial rearrangement of the first inequality
from the statement.

(2)
Since $G$ is a bounded domain, by Lemma~\ref{Jung-thm},
there exists $z\in \Rn$ such that $G\subset B^n(z,r)$.
Denote $B:=B^n(z,r)$\,. Then the domain monotonicity property gives
$$k_G(x,y)\ge k_B(x,y)\,.
$$
Without loss of generality we may now assume that $z=0$.
Choose $u,v\in B$ in such a way that $u=-v$ and $|u-v|=2|u|=|x-y|$.
Hence by (\ref{ksv-sub22}) we have
$$k_G(x,y)\ge k_B(x,y)\ge k_B(-u,u)=2\,\log\frac{r}{r-|u|}\,.
$$
This completes the proof.
\end{proof}
A counterpart of Theorem~\ref{Jung-appl}
for the distance ratio metric $j_G$ can be formulated in the following form
(we omit the proofs, since they are very similar to the proofs of Theorem~\ref{Jung-appl}).

\begin{theorem}\label{Jung-appl-j}
\begin{enumerate}
\item If $x,y\in B^n$ are arbitrary and $w=|x-y|\,e_1/2$, then
$$j_{B^n}(x,y)\ge j_{B^n}(-w,w)= \log \frac{2+t}{2-t}= 2 {\rm artanh}(t/2)\ge t=|x-y|\,,
$$
where the first inequality becomes equality when $y=-x$.
Moreover, the identity map $id:\, (B^n,j_{B^n})\to (B^n,|.|)$ has the sharp modulus of
continuity $\omega(t)=2\tanh (t/2)$.
\item Let $G\psubset \Rn$ be a domain with ${\rm diam}\,G < \infty$ and
$r=\sqrt{n/(2n+2)}\,{\diam}\, G$.
Then we have
$$j_G(x,y)\ge \log \frac{2+t}{2-t} \ge t=|x-y|/r\,,
$$
for all distinct $x,y\in G$ with equality in the first step
when $G=B^n(z,r)$ and $z=(x+y)/2$.
Moreover, the identity map $id:\, (G,j_{G})\to (G,|.|)$ has the sharp modulus of
continuity $\omega(t)=2r\tanh (t/2)$.
\end{enumerate}
\end{theorem}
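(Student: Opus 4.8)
The plan is to exploit the fact that, unlike $k_G$, the metric $j_G$ is given by a closed formula, so the entire statement reduces to one-variable inequalities and the Case~I/Case~II analysis used for Theorem~\ref{Jung-appl} can be streamlined into a direct computation. I would prove part (1) directly from the definition of $j_{B^n}$, and then deduce part (2) by combining Jung's theorem (Lemma~\ref{Jung-thm}), the domain monotonicity of $j$ recorded in the introduction, and a scaling argument.

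For part (1), assume without loss of generality that $|x|\ge|y|$, so that $\min\{\delta_{B^n}(x),\delta_{B^n}(y)\}=1-|x|$ and $j_{B^n}(x,y)=\log\bigl(1+|x-y|/(1-|x|)\bigr)$. Writing $t=|x-y|$, the point $w=t\,e_1/2$ satisfies $|{-w}|=|w|=t/2$ and $|{-w}-w|=t$, whence $j_{B^n}(-w,w)=\log\bigl(1+t/(1-t/2)\bigr)=\log\frac{2+t}{2-t}=2\,{\rm artanh}(t/2)$. Since $\log$ is increasing, the asserted inequality $j_{B^n}(x,y)\ge j_{B^n}(-w,w)$ is equivalent to $1-|x|\le 1-t/2$, i.e.\ to $|x-y|\le 2|x|$, which is immediate from the triangle inequality $|x-y|\le|x|+|y|\le 2|x|$; equality forces $|y|=|x|$ and $0\in[x,y]$, that is $y=-x$. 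The remaining bound $\log\frac{2+t}{2-t}\ge t$ follows by noting that $g(t)=\log\frac{2+t}{2-t}-t$ vanishes at $0$ and has $g'(t)=\frac{4}{4-t^2}-1=\frac{t^2}{4-t^2}\ge 0$ on $[0,2)$, so $g\ge 0$.

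For part (2), Lemma~\ref{Jung-thm} produces a ball $B=B^n(z,r)$ with $r=\sqrt{n/(2n+2)}\,\diam G$ and $G\subset B$; the domain monotonicity of the $j$ metric (stated in the introduction) gives $j_G(x,y)\ge j_B(x,y)$. After translating so that $z=0$, the scaling invariance $j_{rB^n}(x,y)=j_{B^n}(x/r,y/r)$ lets me apply part (1) to $x/r,y/r\in B^n$ with $t=|x-y|/r$, yielding $j_G(x,y)\ge\log\frac{2+t}{2-t}\ge t$. The equality case of part (1) shows that the first step is sharp exactly when $G=B^n(z,r)$ and $x,y$ are antipodal about the centre, i.e.\ $z=(x+y)/2$.

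Finally, both moduli of continuity come from inverting the chain $j(x,y)\ge 2\,{\rm artanh}(t/2)$: applying $\tanh$ gives $|x-y|\le 2r\tanh\bigl(j_G(x,y)/2\bigr)$ (with $r=1$ in the case of $B^n$), and the sharpness is again witnessed by the antipodal pairs $y=-x$ that realise equality in part (1). I expect no serious obstacle here; the only point requiring care is the bookkeeping of the extremal configuration through the scaling, so that the ``equality in the first step'' and the sharpness of $\omega$ are correctly matched to $z=(x+y)/2$ and to antipodal points.
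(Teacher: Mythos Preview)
Your proof is correct. The paper itself omits the argument for this theorem, pointing to the Case~I/Case~II structure of the proof of Theorem~\ref{Jung-appl} as the template. Following that template literally for $j_{B^n}$ would mean first treating $x,y$ on a common diameter (with the two subcases $0\in[x,y]$ and $y\in[x,0]$) and then reducing the general configuration to the diametral one. You instead exploit the closed formula for $j_{B^n}$: once one assumes $|x|\ge|y|$, the inequality $j_{B^n}(x,y)\ge j_{B^n}(-w,w)$ collapses to the single condition $|x-y|\le 2|x|$, which is just the triangle inequality $|x-y|\le|x|+|y|\le 2|x|$. This is genuinely shorter and more transparent than the implied case analysis, and it makes the equality characterisation $y=-x$ (for $x\ne y$) fall out for free. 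Your treatment of part~(2) via Jung's theorem, domain monotonicity, and scaling is exactly what the paper's proof of Theorem~\ref{Jung-appl}(2) does for $k_G$, so there is no divergence there.
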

\section{Results concerning Problem \ref{pro}}\label{sec-4}


In this final section we present our results on
 Problem \ref{pro}.

\begin{theorem}\label{thm1}Let $G_1=\{(x,y)\in \Rt:|x|+|y|< 1\}$ and $G_2= \{ (x,y)\in \Rt:|x|^2+|y|^2< 1 \}$. Then \eqref{eq0} holds for $k_G$ metric with $c=\sqrt{2}$ but there is no constant $c>1$ for which \eqref{eq0} holds for the $j_G$ metric.\end{theorem}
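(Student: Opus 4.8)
The plan is to treat the two metrics separately: for $k_G$ the inequality will come from a pointwise comparison of the boundary distances, and for $j_G$ the impossibility will come from point pairs approaching a common boundary point of $G_1$ and $G_2$. First I would record both distance functions. Since $G_2$ is the unit disk, $\delta_{G_2}(z) = 1 - |z|$ for $z = (z_1,z_2)$ with $|z| = \sqrt{z_1^2 + z_2^2}$. The boundary of $G_1$ lies on the four lines $\pm z_1 \pm z_2 = 1$, and checking that the foot of the perpendicular onto the nearest edge actually lands on that edge shows $\delta_{G_1}(z) = (1 - |z_1| - |z_2|)/\sqrt{2}$ throughout $G_1$. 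The elementary inequality $|z_1| + |z_2| \ge \sqrt{z_1^2 + z_2^2}$ then gives $\sqrt{2}\,\delta_{G_1}(z) = 1 - |z_1| - |z_2| \le 1 - |z| = \delta_{G_2}(z)$, that is, $1/\delta_{G_1}(z) \ge \sqrt{2}/\delta_{G_2}(z)$ at every $z \in G_1$.

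For the $k_G$ part I would integrate this pointwise bound along curves. For any rectifiable $\gamma$ joining $x,y$ inside $G_1$ we obtain $\int_\gamma |dz|/\delta_{G_1}(z) \ge \sqrt{2} \int_\gamma |dz|/\delta_{G_2}(z) \ge \sqrt{2}\,k_{G_2}(x,y)$, where the last step holds because $\gamma \subset G_1 \subset G_2$. Taking the infimum over all such $\gamma$ yields $k_{G_1}(x,y) \ge \sqrt{2}\,k_{G_2}(x,y)$, which is \eqref{eq0} with $c = \sqrt{2}$.

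For the $j_G$ part I would exhibit pairs along which $j_{G_1}/j_{G_2} \to 1$. Take $x = 0$ and $y_n = (1 - 1/n)\,e_1$, approaching the contact point $e_1 \in \partial G_1 \cap \partial G_2$ along the $z_1$-axis, where $\delta_{G_1} = \delta_{G_2}/\sqrt{2}$ and the defining minima differ only by the fixed factor $\sqrt{2}$. A direct computation gives $j_{G_1}(0,y_n) = \log(1 + \sqrt{2}\,(n-1))$ and $j_{G_2}(0,y_n) = \log n$, so the ratio equals $\log(1 + \sqrt{2}(n-1))/\log n = 1 + (\log\sqrt{2})/\log n + o(1/\log n) \to 1$. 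Hence for every $c > 1$ some $y_n$ violates $j_{G_1} \ge c\,j_{G_2}$, so no admissible constant exists.

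The distance formula for the diamond is routine, and the $k_G$ estimate is then immediate; the step that carries the real content is the $j_G$ counterexample. The decisive observation is that, along the axis, $\min\delta_{G_1}$ and $\min\delta_{G_2}$ differ only by the bounded factor $\sqrt{2}$ while the ratio $|x-y|/\min\delta$ can be driven to $\infty$, and the logarithm in the definition of $j$ then washes out any bounded multiplicative discrepancy between the two boundary distances. This contrast — a bounded multiplicative gap in $\delta$ surviving for $k_G$ but dissolving under the logarithm for $j_G$ — is precisely why the two metrics behave oppositely here, and I expect verifying the $\delta_{G_1}$ formula and setting up this limiting family to be the only points requiring care.
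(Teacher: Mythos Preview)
Your argument is correct and follows essentially the same route as the paper: the pointwise bound $\delta_{G_2}\ge\sqrt{2}\,\delta_{G_1}$ is derived in the same way and fed into the $k$-length integral, and for $j_G$ you exploit, as the paper does, that a bounded multiplicative gap in $\delta$ is absorbed by the logarithm when $|x-y|/\min\delta\to\infty$. The only cosmetic difference is your choice of test pairs for $j_G$: the paper takes the symmetric points $(1-\varepsilon,0)$ and $(-1+\varepsilon,0)$, whereas you fix one endpoint at the origin; both families give ratio $\to 1$ for the same reason.
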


\begin{proof}
Obviously,  $\partial G_1\cap\partial G_2 = \{ e_1,-e_1,e_2,-e_2 \}$ is a discrete set.
For each $w\in G_1$, we prove that \begin{equation}\label{eq-th-3}\delta_{G_2}(w)\geq \sqrt{2}\delta_{G_1}(w).\end{equation}
Without loss of generality, we may assume that $Re(w)\geq 0$ and $Im(w)\geq 0$. Then $Re(w)+Im(w)\leq 1$ and  $\delta_{G_1}(w)= \frac{1}{\sqrt{2}}(1-Re(w)-Im(w)).$ Hence, $$\delta_{G_2}(w)=1-\sqrt{Re(w)^2+Im(w)^2}\geq 1-Re(w)-Im(w)=\sqrt{2}\delta_{G_1}(w),$$ which proves equation \eqref{eq-th-3}.

Given $z_1,z_2 \in G_1$,
 let $\gamma$ be a quasihyperbolic geodesic joining $z_1$ and $z_2$ in $G_1$. Then by equation \eqref{eq-th-3},  $$k_{G_2}(z_1,z_2)\leq \int_{\gamma}\frac{|dw|}{\delta_{G_2}(w)}
 \leq \int_{\gamma}\frac{|dw|}{\sqrt{2}\delta_{G_1}(w)}=\frac{1}{\sqrt{2}}k_{G_1}(z_1,z_2).$$

 For the $j_G$ metric case, let $x_0=(1-\varepsilon,0)$, $y_0=(-1+\varepsilon,0)$ where $\varepsilon\in(0,1)$. Then $$|x_0-y_0|=2-2\varepsilon$$ and  $$\delta_{G_2}(x_0)=\delta_{G_2}(y_0)=\varepsilon=\sqrt{2}\delta_{G_1}(x_0)=\sqrt{2}\delta_{G_1}(y_0).$$ Hence,
 $$\frac{j_{G_1}(x_0,y_0)}{j_{G_2}(x_0,y_0)}=\frac{\log(1+\frac{\sqrt{2}(2-2\varepsilon)}{\varepsilon})}{\log(1+\frac{(2-2\varepsilon)}{\varepsilon})}\rightarrow 1, \;\;{\rm as} \;\; \varepsilon\rightarrow 0.$$
\end{proof}

\begin{theorem}\label{thm1'}
For $0<s<1$, let $G_1=\{(x,y): |x|^s+|y|^s< 1\}$ and $G_2=\{(x,y): |x|+|y|< 1\}$.  Then $k_{G_1}(z_1,z_2)\geq 2^{\frac{1}{s}-1}k_{G_2}(z_1,z_2)$ for all $z_1,z_2 \in G_1$.\end{theorem}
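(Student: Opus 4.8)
The plan is to mirror the proof of Theorem~\ref{thm1}: reduce the geodesic inequality to a pointwise comparison of the two distance functions, and then integrate along a $k_{G_1}$-geodesic. Since $t^s\ge t$ for $t\in[0,1]$ and $0<s<1$, one checks that $G_1\subset G_2$ and that $\partial G_1\cap\partial G_2=\{\pm e_1,\pm e_2\}$ is discrete, so the hypotheses of Problem~\ref{pro} are met. Both domains are invariant under sign changes of the coordinates, so it suffices to treat $w=(a,b)$ in the closed first quadrant with $a,b\ge 0$ and $a^s+b^s\le 1$. For such $w$ the nearest boundary point of the diamond $G_2$ lies on the edge $x+y=1$, giving the explicit formula $\delta_{G_2}(w)=(1-a-b)/\sqrt 2$.

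The crux is the pointwise estimate
\[
  \delta_{G_2}(w)\ge 2^{1/s-1}\,\delta_{G_1}(w),\qquad w\in G_1,
\]
equivalently $\delta_{G_1}(w)\le 2^{1/2-1/s}(1-a-b)$. Granting this, if $\gamma$ is a $k_{G_1}$-geodesic joining $z_1,z_2$, then $\gamma\subset G_1\subset G_2$ and
\[
  k_{G_2}(z_1,z_2)\le\int_\gamma\frac{|dw|}{\delta_{G_2}(w)}\le 2^{1-1/s}\int_\gamma\frac{|dw|}{\delta_{G_1}(w)}=2^{1-1/s}\,k_{G_1}(z_1,z_2),
\]
which is the assertion. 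I expect this pointwise estimate to be the main obstacle: $\delta_{G_1}$ has no closed form, because $\partial G_1$ is concave (it bulges toward the origin and meets the axes tangentially at $\pm e_1,\pm e_2$), so the nearest boundary point is awkward to locate. I note that the estimate is sharp, with equality at $w=0$, where the nearest point of $\partial G_1$ is the diagonal point $(2^{-1/s},2^{-1/s})$.

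To bound $\delta_{G_1}$ from above I will avoid the nearest point and instead exhibit a convenient boundary point. Moving from $w$ in the direction $(1,1)$, let $t^*\ge 0$ be the first value with $(a+t^*)^s+(b+t^*)^s=1$; the resulting boundary point has distance $\sqrt 2\,t^*$ from $w$, so $\delta_{G_1}(w)\le\sqrt 2\,t^*$. It therefore suffices to prove $t^*\le 2^{-1/s}(1-a-b)$. Since $g(t)=(a+t)^s+(b+t)^s$ is strictly increasing and $g(t^*)=1$, this is equivalent to $g(T)\ge 1$, where $T:=2^{-1/s}(1-a-b)$.

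Finally I will verify $g(T)\ge1$. For fixed $\sigma=a+b$ (hence fixed $T$) the map $a\mapsto(a+T)^s+(\sigma-a+T)^s$ is concave, so on $[0,\sigma]$ its minimum is attained at an endpoint, giving $g(T)\ge(\sigma+T)^s+T^s=:\phi(\sigma)$. Writing $d:=2^{-1/s}$, so that $d^s=\tfrac12$ and $T=d(1-\sigma)$, one computes
\[
  \phi(\sigma)=\bigl(d+(1-d)\sigma\bigr)^s+\tfrac12(1-\sigma)^s,
\]
a sum of two concave functions of $\sigma$ with $\phi(0)=\phi(1)=1$. A concave function equal to $1$ at both endpoints of $[0,1]$ is $\ge 1$ throughout, so $\phi(\sigma)\ge 1$, which yields $g(T)\ge1$ and hence the pointwise estimate. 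The only routine checks left are the concavity of the two composed powers and the elementary endpoint evaluations.
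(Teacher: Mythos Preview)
Your argument is correct. The overall architecture matches the paper's: reduce to the pointwise inequality $\delta_{G_2}(w)\ge 2^{1/s-1}\delta_{G_1}(w)$ and then integrate along a $k_{G_1}$-geodesic. You also bound $\delta_{G_1}(w)$ from above by the distance to the boundary point reached from $w$ in the direction $(1,1)$, which is exactly the point the paper calls $y_4$.

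Where the two proofs diverge is in estimating the ratio $|w-y_3|/|w-y_4|$ (equivalently, in showing $t^*\le 2^{-1/s}(1-a-b)$). The paper argues geometrically: it pushes $w$ along the line through $e_1$ to a point $w_1$ on the diagonal and uses a similar-triangles comparison to show the ratio can only decrease, then reduces further from $w_1$ to the origin. Your route is purely analytic: freeze $\sigma=a+b$, use concavity of $t\mapsto t^s$ to push to an endpoint $a\in\{0,\sigma\}$, and then observe that $\phi(\sigma)=(d+(1-d)\sigma)^s+\tfrac12(1-\sigma)^s$ is concave with $\phi(0)=\phi(1)=1$. This avoids the figure entirely and makes the sharpness at $w=0$ transparent; the paper's approach, on the other hand, gives a clearer geometric picture of \emph{why} the worst case sits at the origin. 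Both are short, and yours has the advantage of being self-contained without needing to justify the similar-triangle inequality that the paper leaves implicit.
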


\begin{proof} We first prove that for all $w\in G_1$, $\delta_{G_2}(w)\geq 2^{\frac{1}{s}-1}\delta_{G_1}(w)$. Let $w=(a,b)\in G_1$. By symmetry, we only need to consider the case $0\leq b\leq a$. Denote $\gamma_s= \partial G_1\cap \{(x,y): x\geq 0,y\geq 0\}$, $\gamma_1=\partial G_2\cap \{(x,y): x\geq 0,y\geq 0\}$. Let $y_1\in \gamma_1$ be such that line $\ell_{0y_1}$, which goes through $0$ and $y_1$, is perpendicular to $\gamma_1$. Obviously, $\ell_{0y_1}\bot \gamma_s$, say at the point $y_2$. Let $y_3\in \gamma_1$ be such that $[w,y_3]\bot \gamma_1$, $y_4$ be the intersection point of $[w,y_3]$ and $\gamma_s$ and $w_1\in \ell_{0y_1}$ be such that $w_1$, $w$ and $e_1$ are collinear (see Figure \ref{fig1}).

\begin{figure}[!ht]
\centering
\includegraphics[width=0.4\textwidth,height=0.4\textwidth]{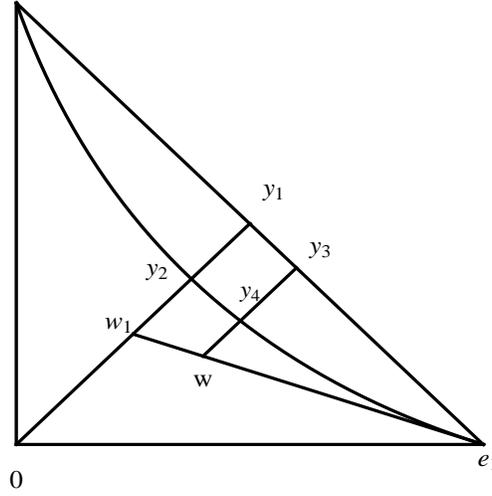}
\caption{Points $y_i$, $w$ and $w_1$ used in the proof of Theorem \ref{thm1'}. \label{fig1}}
\end{figure}
We observe first that \begin{equation}\label{eq2}\frac{\delta_{G_2}(w)}{\delta_{G_1}(w)}\geq \frac{|w-y_3|}{|w-y_4|}.\end{equation}

By similar triangle property, we can get $$\frac{|w-y_3|}{|w_1-y_1|}=\frac{|e_1-w|}{|e_1-w_1|}\geq \frac{|w-y_4|}{|w_1-y_2|},$$ which together with \eqref{eq2} and simple calculation show that $$\frac{\delta_{G_2}(w)}{\delta_{G_1}(w)}\geq \frac{\delta_{G_2}(w_1)}{\delta_{G_1}(w_1)}\geq \frac{\delta_{G_2}(0)}{\delta_{G_1}(0)}=2^{\frac{1}{s}-1}.$$

Given $z_1,z_2\in G_1$, let $\beta$ be a quasihyperbolic geodesic joining $z_1$ and $z_2$ in $G_1$. Then $$k_{G_2}(z_1,z_2)\leq \int_{\beta}\frac{|dw|}{\delta_{G_2}(w)}\leq 2^{\frac{1}{s}-1}k_{G_1}(z_1,z_2).$$

\end{proof}

We generalize the above two Theorems into the following conjecture.

\begin{conjecture}For $0<s<t$, let $G_1=G_s=\{(x,y): |x|^s+|y|^s\leq 1\}$, $G_2=G_t=\{(x,y): |x|^t+|y|^t\leq 1\}$. We conjecture that $k_{G_1}(z_1,z_2)\geq 2^{\frac{1}{s}-\frac{1}{t}}k_{G_2}(z_1,z_2)$ for all $z_1,z_2 \in G_1$.
\end{conjecture}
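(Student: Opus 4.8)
The plan is to follow the template of Theorems~\ref{thm1} and \ref{thm1'}: reduce the metric inequality to a \emph{pointwise} comparison of the distance functions and then integrate along a quasihyperbolic geodesic. Writing $G_1=G_s$ and $G_2=G_t$, note first that $0<s<t$ forces $G_s\subset G_t$ (if $|x|^s+|y|^s\le 1$ then $|x|,|y|\le 1$, hence $|x|^t+|y|^t\le |x|^s+|y|^s\le 1$), so the setting of Problem~\ref{pro} applies. The key step I would isolate is the claim
\begin{equation}\label{eq-conj-key}
\delta_{G_t}(w)\ge 2^{1/s-1/t}\,\delta_{G_s}(w)\qquad\text{for all } w\in G_s.
\end{equation}
Granting \eqref{eq-conj-key}, fix $z_1,z_2\in G_s$ and let $\beta$ be a $k_{G_s}$-geodesic joining them. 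Since $\beta\subset G_s\subset G_t$, inequality \eqref{eq-conj-key} gives
\[
k_{G_t}(z_1,z_2)\le \int_\beta\frac{|dw|}{\delta_{G_t}(w)}\le 2^{-(1/s-1/t)}\int_\beta\frac{|dw|}{\delta_{G_s}(w)}=2^{-(1/s-1/t)}k_{G_s}(z_1,z_2),
\]
which is exactly $k_{G_s}(z_1,z_2)\ge 2^{1/s-1/t}k_{G_t}(z_1,z_2)$. This final step is routine and identical to the two proved theorems, so all the substance lies in \eqref{eq-conj-key}.

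To establish \eqref{eq-conj-key} I would show that the ratio $R(w)=\delta_{G_t}(w)/\delta_{G_s}(w)$ attains its infimum over $G_s$ at the origin, where a direct computation gives $R(0)=2^{1/s-1/t}$. By the eightfold dihedral symmetry of both domains it suffices to treat the closed sector $\{(x,y):0\le y\le x\}$. There I would generalize the similar-triangle argument of Theorem~\ref{thm1'}: for $w$ in the sector let $y_3\in\partial G_t$ and $y_4\in\partial G_s$ be the feet of the perpendiculars realizing $\delta_{G_t}(w)$ and $\delta_{G_s}(w)$, and compare $R(w)$ with the value of the analogous ratio at the point $w_1$ where the line through $w$ and the nearest axis vertex meets the diagonal ray. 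The goal is the chain $R(w)\ge R(w_1)\ge R(0)$, i.e. monotonicity of $R$ as $w$ is pushed toward the origin along these rays.

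The main obstacle — and the reason this is stated only as a conjecture — is that, unlike in Theorem~\ref{thm1'} where $\partial G_2$ was a straight segment, here \emph{both} level curves $\partial G_s$ and $\partial G_t$ are curved, so the similar-triangle comparison no longer collapses to elementary ratios and instead requires controlling the feet of the perpendiculars through the convexity and curvature of the curves $|x|^\sigma+|y|^\sigma=c$. A cleaner route might be to parametrize each boundary by the direction of its outward normal and to prove that the normal distance $\delta_{G_\sigma}$ along a fixed ray from the origin varies monotonically in $\sigma$ in the required quantitative way. I would also flag a genuine restriction: the identity $R(0)=2^{1/s-1/t}$ relies on the nearest boundary point of the origin lying on the diagonal, which holds precisely when $s,t\le 2$; for $t>2$ that nearest point jumps to the axis, $\delta_{G_t}(0)$ becomes $1$, and $R(0)$ drops below $2^{1/s-1/t}$. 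Thus I expect the argument — and apparently the stated constant itself — to hold cleanly only in the range $0<s<t\le 2$, with the general case demanding either a modified constant $\min\{1,2^{1/2-1/t}\}/\min\{1,2^{1/2-1/s}\}$ or an essentially new idea.
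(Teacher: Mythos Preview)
There is nothing to compare against: in the paper this statement is explicitly a \emph{Conjecture}, offered as the common generalization of Theorems~\ref{thm1} and~\ref{thm1'}, and no proof is given.

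Your outline follows exactly the template those two proved theorems suggest: reduce everything to the pointwise bound $\delta_{G_t}(w)\ge 2^{1/s-1/t}\,\delta_{G_s}(w)$ on $G_s$ and then integrate along a $k_{G_s}$-geodesic. You correctly isolate this pointwise inequality as the entire content and correctly explain why the similar-triangles device of Theorem~\ref{thm1'} no longer collapses to elementary ratios once neither boundary arc is a straight segment. As written the proposal is an honest plan rather than a proof: the claimed monotonicity of $R(w)=\delta_{G_t}(w)/\delta_{G_s}(w)$ along the indicated rays is asserted but not established, and that is precisely the missing idea the paper also lacks.

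Your closing remark deserves emphasis, because it is more than a caveat about the method. For $t>2$ the nearest boundary point of the origin in $G_t$ lies on a coordinate axis, so $\delta_{G_t}(0)=1$, whereas $2^{1/s-1/t}\,\delta_{G_s}(0)$ equals $2^{1/2-1/t}>1$ when $s\le 2$ and $2^{1/s-1/t}>1$ when $s>2$; in either case the pointwise inequality fails at $w=0$. Since $k_{G_\sigma}(z_1,z_2)/|z_1-z_2|\to 1/\delta_{G_\sigma}(0)$ as $z_1,z_2\to 0$, the ratio $k_{G_s}(z_1,z_2)/k_{G_t}(z_1,z_2)$ tends to $\delta_{G_t}(0)/\delta_{G_s}(0)<2^{1/s-1/t}$ there, so the conjecture with the stated constant appears to be \emph{false} whenever $t>2$. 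Your suggested restriction to $0<s<t\le 2$ (or your corrected constant) is therefore a correction to the statement itself, not merely to the approach.
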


The following result gives a solution to Problem \ref{pro}.

\begin{theorem}\label{thm2}
Let $G_1$ be a bounded subdomain of the domain $G_2 \subsetneq \Rn$. Then for all $x,y\in G_1$
\[
  m_{G_1}(x,y)\geq c m_{G_2}(x,y),
\]
where $m_{G_i} \in \{ j_{G_i},k_{G_i} \}$ for $i=1,2$ and $c=1+\frac{2 \dist(G_1,\partial G_2)}{\diam(G_1)}$.
\end{theorem}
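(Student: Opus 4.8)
The plan is to reduce the whole statement to a single pointwise comparison of the boundary-distance functions and then treat the two metrics separately. Write $d = \dist(G_1,\partial G_2)$ and $D = \diam(G_1)$, so that $c = 1 + 2d/D$. If $d=0$ then $c=1$ and the claim is just the standard subdomain monotonicity recalled before Problem~\ref{pro}, so I may assume $d>0$, in which case $\overline{G_1}$ is a compact subset of the open set $G_2$. The key first step is the additive estimate
\[
  \delta_{G_2}(w) \ge \delta_{G_1}(w) + d \qquad \text{for every } w\in G_1.
\]
To prove it, pick $p\in\partial G_2$ with $|w-p| = \delta_{G_2}(w)$; since $p\notin\overline{G_1}$ while $w\in G_1$, the segment $[w,p]$ meets $\partial G_1$ at some $q$, and then $|w-q|\ge\delta_{G_1}(w)$ while $|q-p|\ge\dist(\overline{G_1},\partial G_2)=d$, so additivity of length along $[w,p]$ gives the claim. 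Combining this with the elementary fact that $B^n(w,\delta_{G_1}(w))\subset G_1$ forces $\delta_{G_1}(w)\le D/2$, I obtain the multiplicative bound $\delta_{G_2}(w)\ge\delta_{G_1}(w)+d\ge\delta_{G_1}(w)+\tfrac{2d}{D}\delta_{G_1}(w)=c\,\delta_{G_1}(w)$.

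For $m=k$ this finishes the proof in one line, exactly as in Theorems~\ref{thm1} and \ref{thm1'}: if $\gamma$ is a $k_{G_1}$-geodesic from $x$ to $y$, then $\gamma\subset G_1\subset G_2$ is admissible for $G_2$ and
\[
  k_{G_2}(x,y)\le\int_\gamma\frac{|dw|}{\delta_{G_2}(w)}\le\frac1c\int_\gamma\frac{|dw|}{\delta_{G_1}(w)}=\frac1c\,k_{G_1}(x,y).
\]

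For $m=j$ the multiplicative bound alone is not enough: writing $t=|x-y|$ and $m_i=\min\{\delta_{G_i}(x),\delta_{G_i}(y)\}$, one checks that $\log(1+t/m_1)\ge c\log(1+t/(cm_1))$ already fails for small $t/m_1$, since the two sides agree to first order but the right side wins at second order (as $c>1$). The remedy is to use the additive bound $m_2\ge m_1+d$ together with one more piece of geometry: placing the outermost points of $B^n(x,\delta_{G_1}(x))$ and $B^n(y,\delta_{G_1}(y))$ along the line through $x,y$ shows $t+2m_1\le t+\delta_{G_1}(x)+\delta_{G_1}(y)\le D$. I would then fix $m_1=m$, note that the worst case is $m_2=m+d$ (the quantity $c\log(1+t/m_2)$ decreases in $m_2$), and study $g(t)=\log(1+t/m)-c\log(1+t/(m+d))$ on $0\le t\le D-2m$. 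A direct computation gives
\[
  g'(t)=\frac{d}{D}\,\frac{D-2m-2t}{(m+t)(m+d+t)},
\]
so $g$ rises then falls on this interval; since $g(0)=0$, the inequality $g\ge 0$ reduces to verifying the right endpoint $g(D-2m)\ge 0$.

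The endpoint inequality $\log\frac{D-m}{m}\ge c\log\frac{D-m+d}{m+d}$ is where the real work sits, and it is the step I expect to be the main obstacle. I would prove it by fixing $d,D$ and showing that the left-minus-right difference is decreasing in $m$ on $(0,D/2]$ and vanishes at $m=D/2$. Differentiating and clearing denominators, this monotonicity collapses to $c^2m(D-m)\le(D-m+d)(m+d)$; expanding the right-hand side as $m(D-m)+d(D+d)$, rearranging to $(c^2-1)m(D-m)\le d(D+d)$, and substituting $c^2-1=4d(D+d)/D^2$, the inequality reduces after cancelling $d(D+d)$ to $4m(D-m)\le D^2$, i.e. $(D-2m)^2\ge 0$, which is automatic. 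This yields $g(D-2m)\ge 0$, hence $g\ge 0$ on $[0,D-2m]$, hence $j_{G_1}(x,y)\ge c\,j_{G_2}(x,y)$, completing both cases.
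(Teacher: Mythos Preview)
Your proof is correct. The $k_G$ case is identical to the paper's argument: both establish the additive inequality $\delta_{G_2}(w)\ge\delta_{G_1}(w)+d$, upgrade it to the multiplicative bound via $2\delta_{G_1}(w)\le D$, and integrate along a $k_{G_1}$-geodesic.

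For the $j_G$ case the two arguments diverge. The paper fixes $a=|x-y|$, $b=\min\{\delta_{G_1}(x),\delta_{G_1}(y)\}$, $D=\diam(G_1)$, and studies the one-variable function
\[
  f(z)=(D+2z)\log\Bigl(1+\frac{a}{b+z}\Bigr),\qquad z\ge 0;
\]
the desired inequality is $f(0)\ge f(d)$, and the paper argues that $f$ is decreasing by showing $f''\ge 0$ and $f'(\infty)=0$. You instead fix the offset at $d$, view
\[
  g(t)=\log\Bigl(1+\frac{t}{m}\Bigr)-c\,\log\Bigl(1+\frac{t}{m+d}\Bigr)
\]
as a function of $t=|x-y|$ on $[0,D-2m]$, and reduce via an explicit computation of $g'$ to an endpoint inequality in $m$ that collapses to $(D-2m)^2\ge 0$. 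Your route is a little longer but makes essential and explicit use of the geometric constraint $t+2m\le D$; this is genuinely needed, since $g(t)\to-\infty$ as $t\to\infty$, so $g(t)\ge 0$ fails for unrestricted $t$. Correspondingly, the paper's monotonicity claim for $f$ actually also requires $a+2b\le D$ (their $f'$ drops a factor of~$2$; with the correct derivative, $f$ is not globally decreasing without that constraint), so your argument is in this respect the more careful of the two.
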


\begin{proof}We first prove the $j_G$ metric case.

For each $x,y\in G_1$, we are going to prove
$$
\log \left( 1+\frac{|x-y|}{ \min \{ \delta_{G_1}(x) , \delta_{G_1}(y) \} } \right) \geq \left( 1+\frac{2 \dist(G_1,\partial G_2)}{\diam(G_1)} \right) \log \left( 1+\frac{|x-y|}{ \min \{ \delta_{G_2}(x) , \delta_{G_2}(y) \} }\right)
$$

Since $\delta_{G_2}(w)\geq \delta_{G_1}(w)+\dist(G_1,\partial G_2)$ holds for all $w\in G_1$, then it suffices to prove

 \begin{eqnarray*}\diam(G_1)\log \left( 1+\frac{|x-y|}{\min \{ \delta_{G_1}(x) , \delta_{G_1}(y) \} } \right) \ge  (\diam(G_1)+2 \dist(G_1,\partial G_2))\\
\cdot \log \left( 1+\frac{|x-y|}{ \min \{ \delta_{G_1}(x)+\dist(G_1,\partial G_2) , \delta_{G_1}(y)+\dist(G_1,\partial G_2) \} } \right).
 \end{eqnarray*}

Let $$f(z)= (d+2z)\log(1+\frac{a}{b+z}),$$ where $d=\diam(G_1)$, $a=|x-y|$, $b=\delta_{G_1}(x)$ and $z\geq 0$.

Then
$$f'(z)=\log(1+\frac{a}{z+b})-\frac{a(2z+d)}{(z+a+b)(z+b)},$$ and $$f''(z)=\frac{-2a(z+a+b)(z+b)+a(2z+d)(2z+2b+a)}{(z+a+b)^2(z+b)^2}.$$

Denoting $$h(z)=-2a(z+a+b)(z+b)+a(2z+d)(2z+2b+a),$$  it is easy to see that $$h'(z)=a(2d-2b-a+2z)>0$$ which implies that $h(z)>h(0)>0$.

Hence $f''(z)\geq 0$ which yields  $$f'(z)\leq f'(\infty)=0$$ and hence the function $f(z)$ is decreasing. Thus the assertion follows.

For the $k_G$ metric case, we first prove that for each $w\in D_1$, the following inequality holds: $$\delta_{G_2}(w)\geq (1+\frac{2\dist(G_1,\partial G_2)}{\diam(G_1)})\delta_{G_1}(w).$$

In fact, for each $w\in G_1$ we have
 \begin{eqnarray*}\diam(G_1)\delta_{G_2}(w)&\!\! \ge \!\!&  \diam(G_1)(\delta_{G_1}(w)+\dist(G_1,\partial G_2))\\&\!\! \ge \!\!& (\diam(G_1)+2\dist(G_1,\partial G_2))\delta_{G_1}(w).
 \end{eqnarray*}

Given $x,y \in G_1$, let $\gamma$ be a quasihyperbolic geodesic joining $x$ and $y$ in $G_1$. Then $$k_{G_2}(x,y)\leq \int_{\gamma}\frac{|dw|}{\delta_{G_2}(w)}\leq \int_{\gamma}\frac{|dw|}{c\delta_{G_1}(w)}\leq \frac{1}{c}k_{G_1}(x,y),$$
where $c=1+\frac{2\dist(G_1,\partial G_2)}{\diam(G_1)}$.

\end{proof}

\begin{corollary}Let $0<r<R$ and $G_1=B^n(x,r)$, $G_2=B^n(x,R)$. Then for all $x,y\in G_1$
\[
  m_{G_1}(x,y)\geq c m_{G_2}(x,y),
\]
where $m_{G_i} \in \{ j_{G_i},k_{G_i} \}$ for $i=1,2$ and for $c=R/r$.
\end{corollary}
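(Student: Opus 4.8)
The plan is to obtain this directly from Theorem~\ref{thm2}, since $G_1 = B^n(z_0,r)$ is a bounded subdomain of $G_2 = B^n(z_0,R)$ (I write $z_0$ for the common center, to avoid the notational clash in the statement, where the symbol $x$ is used both for the center and for one of the two variable points). The only work left is to evaluate the abstract constant $c = 1 + \frac{2\dist(G_1,\partial G_2)}{\diam(G_1)}$ in the special case of two concentric balls and to check that it collapses to $R/r$.

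First I would compute the diameter. Since $G_1$ is a Euclidean ball of radius $r$, we have $\diam(G_1) = 2r$. Next I would compute $\dist(G_1,\partial G_2)$ using concentricity: every $w \in \overline{G_1}$ satisfies $|w - z_0| \le r$, whereas $\partial G_2 = S^{n-1}(z_0, R)$ consists of points at distance exactly $R$ from $z_0$. The triangle inequality then yields $|w - \zeta| \ge |z_0 - \zeta| - |z_0 - w| \ge R - r$ for all $w \in \overline{G_1}$ and $\zeta \in \partial G_2$, with equality attained along any radius of $G_2$. Hence $\dist(G_1, \partial G_2) = R - r > 0$, and in particular $\partial G_1 \cap \partial G_2 = \varnothing$, so the hypotheses of Theorem~\ref{thm2} are met.

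Substituting these two values gives $c = 1 + \frac{2(R-r)}{2r} = \frac{R}{r}$, and the corollary follows at once from Theorem~\ref{thm2} applied with $m_{G_i} \in \{ j_{G_i}, k_{G_i}\}$. There is essentially no obstacle here: all the analytic content lies in Theorem~\ref{thm2}, and the corollary is merely the observation that its general constant specializes \emph{exactly} to $R/r$ for concentric balls. The only point needing a little care is the harmless renaming of the center, which I have already carried out above.
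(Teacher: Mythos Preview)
Your proof is correct and matches the paper's intended approach: the corollary is stated immediately after Theorem~\ref{thm2} with no separate proof, so it is meant as a direct specialization, and your computation $\diam(G_1)=2r$, $\dist(G_1,\partial G_2)=R-r$ yielding $c=1+\tfrac{2(R-r)}{2r}=R/r$ is exactly what is needed.
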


%
%
%
%
%
%
%

{\bf Acknowledgement.} This research was finished when the first author was an academic visitor
in Turku University and the first author was supported by the Academy of Finland grant of Matti Vuorinen with the
Project number 2600066611. She thanks
Department of Mathematics in Turku University for hospitality.

\end{document}